\documentclass[12pt]{amsart}
\usepackage[margin=1in]{geometry}
\usepackage{lmodern}
\usepackage{mathptmx}
\usepackage{mathtools}
\usepackage{amsthm}
\usepackage{amssymb}
\usepackage{amsrefs}
\usepackage{graphicx}
\usepackage{tikz}
	\usetikzlibrary{decorations.pathreplacing}
	\usetikzlibrary{patterns}
\usepackage{caption}
\usepackage{csquotes}
\usepackage{chngcntr}

\counterwithin{figure}{section}
\MakeOuterQuote{"}
\linespread{1.1}
\setlength{\parindent}{1.5em}

\newcommand{\ra}{\rightarrow}
\newcommand{\pr}{\prime}
\newcommand{\de}{\partial}
\newcommand{\te}{\theta}

\newcommand{\R}{\mathbb{R}}
\newcommand{\Z}{\mathbb{Z}}

\newcommand{\abs}[1]{\left\lvert #1 \right\rvert}
\newcommand{\tld}[1]{\widetilde{#1}}
\newcommand{\lbar}[1]{\overline{#1}}
\DeclareMathOperator{\id}{id}

\renewcommand{\coprod}{\rotatebox[origin = c]{180}{$\prod$}}

\newcommand*{\eqqcolon}{\rotatebox[origin=c]{-180}{$\coloneqq$}}

\newtheorem{thm}{Theorem}

\theoremstyle{definition}

\newtheorem*{defin*}{Definition}
\newtheorem{claim}{Claim}[section]
\theoremstyle{plain}
\newtheorem{lemma}[thm]{Lemma}

\numberwithin{equation}{section}

\begin{document}

\title{The 2-width of Embedded 3-manifolds}

\author{Michael Freedman}
\address{\hskip-\parindent
	Michael Freedman \\
    Microsoft Research, Station Q, and Department of Mathematics \\
    University of California, Santa Barbara \\
    Santa Barbara, CA 93106 \\}

\begin{abstract}
	We discuss a possible definition for "$k$-width" of both a closed $d$-manifold $M^d$, and on embedding $M^d \overset{e}{\hookrightarrow} \R^n$, $n > d \geq k$, generalizing the classical notion of width of a knot. We show that for every 3-manifold 2-width$(M^3) \leq 2$ but that there are embeddings $e_i: T^3 \hookrightarrow \R^4$ with 2-width$(e_i) \ra \infty$. We explain how the divergence of 2-width of embeddings offer a tool to which might prove the Goeritz groups $G_g$ infinitely generated for $g \geq 4$. Finally we construct a homeomorphism $\te_g: G_g \ra \mathrm{MCG}(\underset{g}{\#} S^2 \times S^2)$, suggesting a potential application of 2-width to 4D mapping class groups.
\end{abstract}

\maketitle

\section{Introduction}

For smooth knots $K: S^1 \hookrightarrow S^3$ the simplest definition of width$(K)$ is $\mathrm{minimax}\abs{K \cup \pi^{-1}(x)}$, where the minimum is taken over all embeddings (still denoted $K$) isotopic to $K$ and the maximum is over all $x \in \R$. $\pi$ is projection (to, say, the third coordinate) $\pi: \R^3 \ra \R$, $\pi(x_1, x_3, x_3) = x_3$. We assume $K$ lies in $\R^3 = S^3 \setminus$pt, and that $x$ is a regular value of the composition $S^1 \xrightarrow{K} \R^3 \xrightarrow{\pi} \R$. Finally $\abs{\ \cdot\ }$ counts number of points.

In \cite{gabai} a slightly more refined version of width, called "Gabai width," was introduced in the proof that all knots satisfy property R. A similar concept of width by Thompson was used in \cite{thompson} to produce an algorithm for recognizing the three sphere, $S^3$. In \cite{st} the width of three-manifolds was introduced by Scharlemann and Thompson.

In [F1] the "width of a group" was studied also using maps to $\R^1$ and in \cite{fh} the "width" of knots $S^n \overset{K}{\hookrightarrow} \R^{n+2}$ was investigated via a projection $\R^{n+2} \xrightarrow{\pi} \R^n$. Let's try to make a couple general definitions consistent with these early uses of width.

We'll use the term $k$-width for minimaxes involving projection to $\R^k$. So the classical notions are kinds of "1-width," whereas the width discussed in \cite{fh} would now be called $n$-width. Our primary focus is on the width of embeddings, specifically the 2-width of $e: M^3 \hookrightarrow \R^4$ embedding of a 3-manifold in $\R^4$. But it is natural to discuss the notion of width first in the absolute context first---no embedding, just a closed manifold $M$. This will give some perspective on what facts about $k$-width of embedding are interesting or even surprising.

Let $M^d$ be a smooth closed manifold of dimension $d$. All constructions and terminology will be in the smooth category in this paper and manifolds are assumed closed unless otherwise stated so these conditions will not be repeated. Given a numerical (real number) property $P$ of $d-k$ manifolds we can define $k$-$\text{width}_P(M^d)$
\begin{equation}
	k\text{-width}_P(M)^d = \mathrm{minimax}\ P(\pi^{-1}(x))
\end{equation}
where the minimum is taken over an appropriate class of maps $\pi: M^d \ra \R^k$ and the maximum is taken over all regular values $x \in \R^k$.

I thank D. Zuddas for pointing out that some genericity assumption on $\pi: M^d \ra \R^k$ is required. Otherwise, for example a constant map would have only the empty set among preimages of its regular values. A natural choice, and let us adopt it, is topologically stable (t.s.) maps\footnote{An alternative, less restrictive, assumption on the smooth map $\pi: M^d \ra \R^k$ , is that all their preimages be smoothly stratified spaces of dimension less than or equal to $\max(0,d-k)$. This restriction has the advantage of being an easily verified hypothesis in examples. The exact definition of the “appropriate” class of maps has no effect on either of our two theorems.}. It is an unpublished theorem of John Mather that, without dimension restriction, such maps $\{f: P \ra Q\}_{\text{t.s.}}$, $P$ compact, are open dense in the space of all smooth maps $\{f: P \ra Q\}$ with the Whitney $C^\infty$-topology.

\begin{defin*}
	$f: P \ra Q$ is \emph{topologically stable} if for any $\tld{f}$ sufficiently close to $f$ in the $C^\infty$-topology there exist homeomorphisms $h_1$ and $h_2$ making the following diagram commute:

	\begin{center}
		\begin{tikzpicture}[scale=1.3]
			\node at (0,0) {$P$};
			\node at (1.25,0.1) {$\scriptstyle{f}$};
			\draw [->] (0.3,0) -- (2.2,0);
			\node at (2.5,0) {$Q$};

			\node at (0,-1) {$P$};
			\node at (1.25,-0.9) {$\scriptstyle{\tld{f}}$};
			\draw [->] (0.3,-1) -- (2.2,-1);
			\node at (2.5,-1) {$Q$};

			\draw [->] (0,-0.2) -- (0,-0.8);
			\draw [->] (2.5, -0.2) -- (2.5, -0.8);
			\node at (0.2,-0.5) {$\scriptstyle{h_1}$};
			\node at (2.7,-0.5) {$\scriptstyle{h_2}$};
		\end{tikzpicture}
	\end{center}

	Mather's theorem is exposited in \cite{gibson}.
\end{defin*}

When $d-k$ is small there are natural choices for $P$. Here we mention a few; as $d-k$ grows the choices proliferate.

\begin{table}[!ht]
	\centering
	\begin{tabular}{ll}
	$(d-k) = \dim \pi^{-1}(x)$ & Property $(P)$ \\
	\hline
	0 & \# points \\
	1 & \# of components (circles) \\
	2 & \# components, $g=$genius, $\sum$ genus (component) + \# $S^2$'s, $\dots$ \\
	3 & \# components, Gromov-Thurston volume \\
	4 & \# components, Gromov-Thurston volume, signature \\
	$\ast$ & Euler char. and semi-characteristic may be interesting choices
	\end{tabular}
\end{table}

(Actually if the condition that $x \in \R^k$ be a regular value is dropped, in the range $-d < d-k < 0$, $P = \#$ points is still interesting: $M^d$ has $k$-wdith$(M^d) = 1$ iff $M$ embeds in $\R^k$.)

Here are (consequences of) a few familiar theorems in this notation:

\vspace{0.5em} \noindent \textbf{Montesinos \cite{montesinos}}. All 3-manifolds are 3-fold branched covers of $S^3 \implies$
\begin{equation}
	\mathrm{3-width}(M^3) \leq 6
\end{equation}
(Pf: Use the composition $M \xrightarrow{\text{3-1 b.c.}}S^3 \xrightarrow{\text{2-1 folding}} \R^3$. This composition is not actually a topologically stable map because the local model normal to the branch locus resolves into folds and cusps, but one may perturb this composition to achieve a topologically stable map without increasing the number of (regular) point preimages.)

\vspace{0.5em} \noindent \textbf{Piergallini \cite{piergallini}}. All 4-manifolds are 4-fold branches covers of $S^4 \implies$
\begin{equation}
	\mathrm{4-width}(M^4) \leq 8
\end{equation}
(Similar proof)

\vspace{0.5em} Some obvious facts are
\begin{align}
	& \text{1-width}(M^1) = \text{1-width}(\coprod_{i=1}^k S^1) = 2, \\
	& \text{1-width}(M^2) \leq 2
\end{align}
with equality except for $S^2$ and the non-orientable surfaces where 1-width is 1.
\begin{equation}
	\text{If } P = \text{genus, 1-width}_P(M^3) \text{ is an unbounded function}
\end{equation}

That is, there is a sequence of 3-manifolds $\{M_i\}$ on which the minimax genus of a "sweept out" by surface diverges. (Pf: From Rubenstein-Pitts \cite{rp}, and later work e.g. \cite{ketover} it is known the minimax sweep outs lead to embedded minimal surfaces $\Sigma$ of no larger genus than the genus of surfaces in the sweep out. If $\{M^3_i\}$ are hyperbolic with diverging injectivity readius area$(\Sigma)$ also diverges. But for a minimal surface in a hyperbolic manifold
\[
-\chi(\Sigma) \geq \frac{\text{Area}(\Sigma)}{2\pi}
\]

Possibly similar arguments could show that for $P=$ Gromov-Thurston volume that $k$-$\text{width}_P(M^d)$ diverges for $d-k \geq 2$.)

Finally, \cite{groupw} answered a math overflow question. Using the current terminology, it proved that if $P$ is "rank of $H_1(;\Z)$" then
\begin{equation}
	\text{1-width}_P(M^d) \text{ diverges for all } d \geq 4
\end{equation}

To this list we add one additional theorem on width of manifolds:

\begin{thm}
	For all 3-manifolds $M$, 2-width$(M) \leq 2$. If 2-width$(M) = 1$ then $M = S^3$ or $(\underset{k}{\#}(S^1 \times S^3) \# (\underset{j}{\#} S^2 \tld{\times} S^2))$. That is, every 3-manifold admits a map to $\R^2$ with all regular value preimages consisting of one or two circles. If 2 circles \emph{never} occur then $M$ is one of the very special manifolds above.
\end{thm}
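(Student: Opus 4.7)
The plan is to handle the two parts of the theorem separately: first the universal bound, then the rigidity at equality.

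For the bound 2-width$(M) \leq 2$, I would start from a Heegaard splitting $M = V_1 \cup_\Sigma V_2$ of genus $g$, realized by a self-indexing Morse function $f : M \to [-1, 1]$. Away from critical values of $f$, each regular level $\Sigma_t := f^{-1}(t)$ is a closed surface isotopic to $\Sigma_g$, for which the already-quoted bound 1-width$(\Sigma_g) \leq 2$ supplies a Morse function $h_t : \Sigma_t \to \R$ whose regular level sets contain at most two circles. The task is to knit the family $\{h_t\}$ into a global smooth function $h : M \to \R$ so that $\pi := (f, h) : M \to \R^2$ has, at each regular value $(t, s)$, the single-surface level set $h_t^{-1}(s) \subset \Sigma_t$ as its fiber---hence at most two circles. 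Over product regions this is immediate. Across each index-$1$ or index-$2$ critical value of $f$, where $\Sigma_t$ gains or loses a 1-handle, I would localize the handle attachment inside a ``chain-of-tori'' standard model for the surface Morse function, so that the change of topology of $\Sigma_t$ is effected by a single saddle of $h$ inside the handle and the two-circle bound is preserved throughout. A final $C^\infty$-small perturbation renders $\pi$ topologically stable.

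For the rigidity when 2-width$(M) = 1$, let $\pi : M \to \R^2$ be a topologically stable map with every regular fiber at most one circle. By the classification of stable maps $M^3 \to \R^2$, the singular set of $\pi$ consists of fold and cusp points projecting onto a discriminant $\Delta \subset \R^2$. An indefinite fold transitions a fiber between $k$ and $k + 1$ circles, so it would require $k \geq 1$ and produce a $2$-circle fiber---ruled out by hypothesis. Hence every fold is definite, toggling a fiber between $0$ and $1$ circle. A transverse crossing of two such folds in $\Delta$ would, via the product-of-folds local model, exhibit one of the four adjacent quadrants with a $2$-circle fiber; so fold arcs do not cross. Therefore $\Delta$ is a disjoint union of simple closed curves (smooth up to cusps), and the image $R := \pi(M) \subset \R^2$ is the compact planar surface bounded by $\Delta$, necessarily $R \cong \Sigma_{0, b}$ for some $b \geq 1$.

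Over $\mathrm{int}(R)$, $\pi$ is an $S^1$-bundle, classified by its monodromy $\rho \in H^1(R; \Z/2) \cong (\Z/2)^{b-1}$. Over each boundary circle $c_i \subset \partial R$, the $S^1$ fiber collapses via a definite fold, and the preimage of a collar of $c_i$ is a solid torus when $\rho(c_i) = 0$ and a solid Klein bottle when $\rho(c_i) = 1$. Assembling these caps onto the bundle over $\mathrm{int}(R)$ reconstructs $M$: view $R$ as a disk with $b - 1$ open sub-disks removed, so that $b = 1$ (pure disk) forces $M \cong S^3$, and each subsequent removed sub-disk corresponds, by a connect-sum operation on $M$, to an additional summand of $S^1 \times S^2$ (for trivial local monodromy) or $S^2 \tld{\times} S^1$ (for non-trivial). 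Thus $M$ is a connect sum of $k$ copies of $S^1 \times S^2$ and $j$ copies of $S^2 \tld{\times} S^1$ with $k + j = b - 1$, matching the claim.

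The chief obstacle is Part 1: guaranteeing the $h_t$ fit together into a smooth $h$ without the handle attachments in $\Sigma_t$ ever producing a momentary third circle in a level set of $h_t$. I would address this by engineering the surface Morse functions in a ``chain-of-tori'' normal form in which each handle attachment occupies a local region where only one of the two active level circles is in play, so that introducing the handle costs only a single saddle of $h$. Verifying that such a normal form exists globally, deforms smoothly with $t$, and yields a topologically stable $\pi$ is the main technical work.
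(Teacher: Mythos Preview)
Your overall framework for both parts matches the paper's: for the bound, lift a Morse function $f:M\to\R$ to $g=(f,h):M\to\R^2$ by choosing a secondary height function on each level surface; for the rigidity, recognize $M$ as a circle bundle over a planar domain with fibers collapsing over the boundary. Your Part~2 is in fact more detailed than the paper's one-sentence treatment: the analysis ruling out indefinite folds and fold-crossings, then reading off the connect-sum decomposition from the monodromy, is essentially correct and more informative than what the paper provides. (One small addendum: once indefinite folds are excluded, cusps are excluded too, since a cusp is where a fold arc changes type.)

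Part~1, however, contains a real gap, and you have located the difficulty in the wrong place. You write that ``over product regions this is immediate'' and that the work lies at the critical points of $f$. The paper finds the opposite. At each critical point there is a simple local model for $g$ (the paper draws these); the genuine obstacle is \emph{between} critical points. The standard model just above one critical level fixes a particular Morse function $h_0$ on $\Sigma_t$, while the standard model just below the next critical level demands a different one $h_1$, and these differ by a diffeomorphism of $\Sigma_t$. Equivalently: the attaching circle of an index-$2$ handle is an \emph{arbitrary} scc on $\Sigma_t$, and your plan to ``localize the handle attachment inside a chain-of-tori standard model'' already presupposes that this curve has been isotoped into standard position relative to $h_t$---which amounts to traversing a path of surface Morse functions realizing a mapping-class-group element. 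The paper's actual technical content is exactly this: for each Dehn-twist generator $D_\gamma$ of $\mathrm{MCG}(\Sigma_g)$ it exhibits an explicit interpolation $\{h_t\}_{0\le t\le 1}$ from $h_0$ to $D_\gamma\circ h_0$ whose generic level sets never exceed two circles. Without that interpolation step, your $h_t$'s do not glue to a globally defined $h$ with the two-circle property, and the argument is incomplete.
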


The proof will be given in section 3.

Now turn to $k$-width of an embedding $M^d \overset{e}{\hookrightarrow} \R^n$. Again given a numerical property $P$,
\begin{defin*}
	$k$-width$(e) = \mathrm{minimax}\ P((\pi \circ e)^{-1}(x))$, where the minimum is over all embeddings isotopic to $e$ (still denoted $e$) while the maximum is over all $x$, regular values for $e \circ \pi$, $\pi: \R^n \ra \R^k$ projection onto the last $k$ coordinates.
\end{defin*}
One could consider the case where $P$ depends not just on the manifold $(\pi \circ e)^{-1}(x)$ but also on its embedding in the fiber of $\pi$, $\R^{n-k}$, but we do \emph{not} do this here.

The main result of this paper is:

\begin{thm}
	There is a family of embeddings, $e_i: T^3 \hookrightarrow \R^4$, of the 3-torus into $\R^4$ for which 2-width$(e_i)$ diverges. That is, as $i$ increases the number of circles which \emph{must} occur in some preimage $(\pi \circ e)^{-1}(x)$ for some regular $x \in \R^2$ diverges.
\end{thm}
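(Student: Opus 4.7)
The plan is to construct a family $e_i: T^3 \hookrightarrow \R^4$ whose complements have unbounded fundamental-group complexity, and to bound $2$-width from below by extracting a $4$-dimensional handle decomposition of the complement from the fold-cusp structure of $\pi \circ e_i$.

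For the construction, I would start with a sequence of knots $K_i \subset S^3$ whose groups $\pi_1(S^3 \setminus K_i)$ require a diverging number of generators (for example, knots with growing tunnel number, which exist in abundance). Apply the Artin spinning construction to each $K_i$ to obtain knotted $2$-spheres $S^2(K_i) \subset \R^4$, for which the classical computation yields $\pi_1(\R^4 \setminus S^2(K_i)) \cong \pi_1(S^3 \setminus K_i)$. Attach a trivially framed $1$-handle to $S^2(K_i)$ in $\R^4$, producing a knotted $2$-torus $T^2_i \subset \R^4$ with trivial normal bundle, and define $e_i$ as $\partial(\nu(T^2_i))$, a copy of $T^2 \times S^1 \cong T^3$. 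Alexander duality gives $\R^4 = U_i \cup_{e_i(T^3)} V_i$ with $U_i = \nu(T^2_i) \cong T^2 \times D^2$ (so $\pi_1(U_i) = \Z^2$), while $V_i = \R^4 \setminus \mathrm{int}(U_i)$ deformation retracts onto $\R^4 \setminus T^2_i$. By Seifert--van Kampen, $\pi_1(V_i)$ is a quotient of $\pi_1(\R^4 \setminus S^2(K_i))$ by a single relation from the $1$-handle, and with a careful choice of the $1$-handle core (e.g.\ trivially embedded and linked only once with the knot) the minimum generator number of $\pi_1(V_i)$ still diverges with $i$.

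For the lower bound, suppose $2$-width$(e_i) \leq N$ via some isotopic embedding, and set $f = \pi \circ e_i : T^3 \to \R^2$. The critical image $\Delta \subset \R^2$ is a $1$-complex of fold arcs and cusps; over each face of $\R^2 \setminus \Delta$ the fiber of $f$ is a disjoint, nested collection of at most $N$ circles in a $2$-plane, each bounding a disk in that plane. Choosing a generic foliation of $\R^2$ by proper lines, sweeping across $\R^2$ and tracking how the circles evolve across $\Delta$---together with the planar disks they bound, whose inclusion into $U_i$ or $V_i$ is forced by the nesting---assembles $V_i$ as a $4$-manifold built with a bounded (function of $N$) number of handles of each index. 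This in turn bounds the minimum generator number of $\pi_1(V_i)$ by a function depending only on $N$.

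Combining the two steps gives the theorem: if $2$-width$(e_i) \leq N$ for all $i$, the generator number of $\pi_1(V_i)$ would be uniformly bounded, contradicting the divergence from the construction. The main obstacle is the lower-bound step, which is essentially a $4$-dimensional analogue of Scharlemann--Thompson thin position: one must track combinatorially how nested planar circles in the fibers of $f$ are born, die, merge, and split across the fold arcs and cusps of $\Delta$, and verify that the planar disks they bound organize consistently into the $2$- and $3$-handles of a genuine handle decomposition of $V_i$. The nested, planar character of the fibers (each a link in $\R^2$, hence automatically bounding a canonical system of disks) is a key technical advantage over the classical $3$-dimensional setting and should make the combinatorial bookkeeping tractable.
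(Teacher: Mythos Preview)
Your construction of the family $e_i$ via spun knots is reasonable, but the lower-bound step has a genuine gap. You assert that if $2$-width$(e_i) \leq N$ then sweeping across $\R^2$ and tracking circles through the fold--cusp set $\Delta$ of $f = \pi \circ e_i$ yields a handle decomposition of $V_i$ with a number of handles bounded in terms of $N$ alone. This does not follow: $N$ controls only the number of circles in any single regular fiber, whereas the number of handles produced by such a sweep is governed by the complexity of $\Delta$ itself---how many fold arcs and cusps there are---and nothing in the hypothesis bounds that. For the simplest illustration of the phenomenon, a Morse function on $\Sigma_g$ can have every regular level consisting of at most two circles yet have $2g+2$ critical points; analogously here, bounded fiber complexity is compatible with arbitrarily many birth/death/merge/split events along the sweep. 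You might hope to cancel the resulting handles down to a number depending only on $N$, but that cancellation is precisely the substantive content of the lower bound, and your sketch does not supply it. Without it there is no link between $2$-width$(e_i)$ and the minimum number of generators of $\pi_1(V_i)$, and the contradiction does not close.

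The paper's argument avoids handle-counting altogether and works along a different axis. It fixes a \emph{single} knotted torus---the Litherland--Asano example $f_1: T^2 \hookrightarrow \R^4$---whose complement $Y$ has the property that the peripheral subgroup $\pi_1(T^3) \cong \Z^3$ injects. The base lower bound (Lemma~\ref{old21}) is a Lusternik--Schnirelmann cup-length argument carried out in the peripheral cover $\tld{Y}$: if every fiber plane met the tubular neighborhood $X$ only in relatively inessential pieces, one could lift a fine handle decomposition of $\R^2$ to cover $\tld{Y}$ by three sets each null-homotopic in $\tld{Y}$, forcing every triple cup product in $H^\ast(\tld{Y};\Z)$ to vanish---contradicting the retraction $\tld{Y} \to \de\tld{Y} \cong T^3$. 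This pins down at least two essential components in some fiber for $e_1$. Divergence is then obtained not by changing the knot type but by $i$-fold \emph{cabling} of $f_1$ inside its own tubular neighborhood: a short covering-space count (Lemma~\ref{old22}) shows the essential fiber components multiply to at least $i$, hence at least $i$ boundary circles. The invariant doing the work---cup-length in a specific cover---is a homotopy invariant of the complement and therefore automatically stable under isotopy of the embedding, which is exactly the robustness your handle count lacks.
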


The proof will be given in section 2.

Notice the contrast with Theorem 1; it is the embedding, not the source manifold, that forces the preimage to increase with complexity.

The paper is organized as follows:
\begin{itemize}
	\item Section 2 proves Theorem 2.
	\item Section 3 proves Theorem 1.
	\item Section 4 explains how 2-width---in light of Theorem 2---\emph{might} be a tool for proving the higher genus Georitz groups infinitely generated.
\end{itemize}

To say just a word about section 4 here, recall that the Goeritz group $G_g$ can be defined as the "motion group,"
\begin{equation}
	G_g = \pi_1 \text{ (space of genus $g$ Heegaard surfaces in $S^3$)}
\end{equation}

It has long been known that 5 "generators" proposed by Powell [P] do in fact generate $G_2$ and it was proved in \cite{dehn} that these five also generate $G_3$. It is currently an open question whether $G_g$, $g \geq 4$, are finitely or infinitely generated. We show that $G_g$ is infinitely generated iff there are motions (loops) of the Heegaard surface $\Sigma_g$ in which the "complexity" of some individual "frame" of the movie (i.e. motion) diverges. The movie traces out an embedding of $e_{\text{movie}}: M_{\text{movie}} \hookrightarrow \R^4$, the mapping torus 3-manifold. We show that 2-width$(e_{\text{movie}})$ lower bounds "frame complexity," so if the former diverges over motions of $\Sigma_g$, then $G_g$ must be infinitely generated.

The final topic is really just a comment connecting two difficult subjects, and can be made here. It is well known that the (orientable) surface of genus $g$ is obtained by 2-fold branched covers of the 2-sphere, $S^2$ along $2g+2$ point. If those points are allowed to move (without collision) in loop, that motions defines a map from the spherical braid group $B^s_{2g+2}$ to the mapping class group of the 2-fold branched cover
\begin{equation}
	\te: B^s_{2g+2} \ra \mathrm{MCG}(\Sigma_g)
\end{equation}

Exactly the same construction can be done in four dimension where 2-fold branched cover about a standardly embedded surface of genus $g$ $\Sigma_g$ yields $\underset{g}{\#} S^2 \times S^2$. On the level of motion groups and mapping class groups, we obtain
\begin{equation}\label{arrows}
	\begin{split}
		G_g = \pi_1 \text{ (genus $g$ He} & \text{egaard surfaces in $S^3$)} \\
		& \big\downarrow \\
		\pi_1 \text{ (smoothly standard} & \text{ embeddings of $\Sigma_g$ in $S^4$)} \\
		& \big\downarrow \\
		\mathrm{MCG}(\underset{g}{\#} S^2 \times S^2) & = \pi_0(\mathrm{Diff} \underset{g}{\#} S^2 \times S^2)
	\end{split}
\end{equation}

The first arrow in (\ref{arrows}) is defined by including $S^3$ as the equator of $S^4$. The second arrow is defined by tracking the sheets of the branched cover as the branching locus moves. Just as in the two dimensional case, this tracking covers the motion by a family of diffeomorphisms from the initial to subsequence branched covers. When the loop closes a self-diffeomorphism is defined.

Little is known about any of the three groups in (\ref{arrows}). Since one motivation of the present paper is to develop a tool for proving $G_g$ is infinitely generated for $g \geq 4$, we note that this same tool may be relevant for the study of the presently mysterious four dimensional mapping class groups.

\makeatletter
\renewcommand{\thethm}{\thesection.\arabic{thm}}
\@addtoreset{thm}{section}
\makeatother

\section{2-width$(e_i: T^3 \hookrightarrow \R^4)$}

The starting point for the construction of $e_i:T^3 \hookrightarrow \R^4$, $i = 1,2,3,\dots$ is the example of Litherland and Asano (\cite{litherman}, \cite{asano}) of a smoothly embedded torus $f_1: T^2 \hookrightarrow \R^4$ with the property that the inclusion of the peripheral 3-torus, $T^3 \overset{e_i}{\hookrightarrow} \R^4 \setminus f_1(T^2)\ \eqqcolon\ Y$ induces an injection on $\pi_1$,
\begin{equation}
	\{0\} \ra \pi_1(T^3) \xrightarrow{e_{i\#}} \pi_1(Y)
\end{equation}

Consistent with the terminology of \cite{fh} we say $Y$ is \emph{homologically rich} since $Y$ has a covering space $\tld{Y}$ with cup products (we use integer coefficients) of the maximum possible length, three; $\pi_1(\tld{Y}) = e_{1\#}(\pi_1(T^3))$, i.e. $\tld{Y}$ is the peripheral cover. Denote the closed tubular neighborhood of $e(T^2)$ by $X$, so $\R^4 = X \cup Y$. Let $d: \R^4 \ra \R^4$ be any diffeomorphism and $\pi$ be the composition $\R^4 \xrightarrow{d} \R^4 \ra \R^2$ of $d$ followed by projection onto the last two coordinates.

\begin{lemma}\label{old21}
	For some $p \in \R^2$, a regular value of $\pi \circ e_1: T^3 \ra \R^2$, $\pi^{-1}(p) \cap X$ the compact planar domain $D$ within the Fiber $\R^2_p = \pi^{-1}(p)$, will have at least two connected components $D_1$ and $D_2$ of $D$ so that
	\[
	\mathrm{inc}_\ast[D_j, \de] \neq 0 \in H_2(X, \de;\Z), \ j=1,2
	\]
\end{lemma}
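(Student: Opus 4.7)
The plan is to translate ``essential'' into a signed intersection count with the core torus, use global null-homology to deduce that one essential component forces a second, and then rule out the all-inessential case using $\pi_1$-injectivity of the peripheral $T^3$.

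Identifying $X \cong T^2 \times D^2$ gives $H_2(X, \partial X; \Z) \cong \Z$, generated by any normal disk $\{\mathrm{pt}\} \times D^2$. For a properly embedded oriented surface $S \subset X$ transverse to $T^2_{\mathrm{core}}$, the class $[S, \partial S]$ records the signed intersection $S \cdot T^2_{\mathrm{core}}$, equivalently the meridional coefficient of $[\partial S] \in H_1(T^3)$ (the $\alpha$- and $\beta$-coefficients of $[\partial S]$ vanish automatically because the connecting map $H_2(X, \partial X) \to H_1(\partial X)$ has image equal to the meridional kernel of $H_1(T^3) \to H_1(X)$). Apply this to $D = F_p \cap X = \bigsqcup_j D_j$, where $F_p := \pi^{-1}(p)$ is the fiber over a regular value. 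Both the $2$-plane $F_p$ and the closed surface $T^2_{\mathrm{core}}$ are null-homologous in $\R^4$, so $F_p \cdot T^2_{\mathrm{core}} = 0$; hence $\sum_j [D_j, \partial D_j] = 0 \in \Z$, and signed cancellation guarantees that a second essential component exists as soon as one does. The lemma thus reduces to exhibiting a regular $p$ for which at least one component of $F_p \cap X$ is essential.

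Argue by contradiction: suppose every regular $p$ yields only inessential components. Writing $\Gamma = F_p \cap T^3 = \bigsqcup_i \gamma_i$ and $n(\gamma_i) := \mathrm{lk}(\gamma_i, f_1(T^2))$ for the meridional winding, the assumption forces $\sum_{\gamma \subset \partial D_j} \pm n(\gamma) = 0$ for every $j$. On the $Y$-side, each bounded component $R_k$ of $F_p \setminus D$ is a planar surface in $Y$ with $\partial R_k \subset T^3$, and $[\partial R_k] = 0 \in H_1(Y) = \Z$ balances the $n$'s on its boundary as well. Together these make $n$ a $1$-cycle on the planar bipartite dual graph of $\Gamma \subset F_p$. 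The contradiction will come from \emph{strengthening} the $R_k$-relations via $\pi_1$-injectivity: for an annulus $R \subset Y$ with $\partial R = \gamma_1 \sqcup \gamma_2$, the $\pi_1(Y)$-conjugacy of $\gamma_1$ to $\gamma_2$, pushed through a malnormality-type property of $\pi_1(T^3) \subset \pi_1(Y)$ supplied by the Litherland-Asano construction (manifested in the cup-length-three peripheral cover $\tld{Y}$), gives $[\gamma_1] = [\gamma_2]$ in $H_1(T^3) = \Z^3$; generalized to arbitrary planar $R_k$, this upgrades $[\partial R_k] = 0 \in H_1(Y)$ to the stronger $[\partial R_k] = 0 \in H_1(T^3)$. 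With this augmentation, the dual-graph cycle system has only the zero solution, forcing $n \equiv 0$ and hence $F_p \cap f_1(T^2) = \emptyset$ for every regular $p$---contradicting the fact that $\pi|_{f_1(T^2)} : T^2 \to \R^2$ has $2$-dimensional image.

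The main obstacle is precisely this upgrade step: converting a $\pi_1(Y)$-conjugacy of boundary circles into an equality in $\pi_1(T^3) = \Z^3$. This is where the full strength of Litherland-Asano must be invoked, not merely peripheral $\pi_1$-injection but also the homological richness of $\tld{Y}$; without it, the $H_1(Y)$-level balances alone admit nontrivial solutions to the dual-graph cycle system and the bad case cannot be excluded.
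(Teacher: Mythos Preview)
Your opening reduction---that $[D]=0$ in $H_2(X,\partial;\Z)$ forces essential components to come in cancelling pairs, so one essential $D_j$ yields a second---is correct and matches the paper.

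The remainder has two genuine gaps.

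\textbf{The ``upgrade'' is not established.} You flag it yourself as the main obstacle, and the mechanism you propose---a malnormality-type property of $\pi_1(T^3)$ inside $\pi_1(Y)$---is neither proved nor known to hold for the Litherland--Asano torus; that construction furnishes $\pi_1$-injectivity, not malnormality. The paper reaches the same intermediate conclusion (every circle of $F_p\cap T^3$ is null-homotopic in $T^3$) by a direct induction on the nesting depth of the circles in the plane $F_p$: an innermost circle bounding on the white ($Y$) side bounds a disk in $Y$ and is therefore trivial in $\pi_1(T^3)$ by peripheral injectivity; an innermost circle bounding on the black ($X$) side bounds a disk in $X$, hence lies in the meridional $\Z$, and the standing hypothesis $[D_j,\partial]=0$ forces this multiple to be zero. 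One then strips off the innermost layer and repeats. No malnormality is invoked.

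\textbf{The endgame is wrong.} Even granting that every $\gamma_i$ is trivial in $H_1(T^3)$ (your $n\equiv 0$), it does \emph{not} follow that $F_p\cap f_1(T^2)=\emptyset$. A black component $D_j$ can meet the core torus in two points of opposite sign while $\partial D_j$ has meridional winding zero and is null-homotopic in $T^3$; algebraic intersection zero is not geometric disjointness. So the contradiction with ``$\pi\vert_{f_1(T^2)}$ has $2$-dimensional image'' never materializes. Here the paper's route diverges entirely from yours: once all circles are trivial, each slice $Y_p=F_p\cap Y$ is null-homotopic in $Y$ (its fundamental group is normally generated by the boundary circles), hence lifts to the peripheral cover $\tld Y$. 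Covering the target disk by the $0$-, $1$-, and $2$-handles of a fine handle decomposition yields $\tld Y=\tld Y_0\cup\tld Y_1\cup\tld Y_2$ with each piece null-homotopic, and a Lusternik--Schnirelmann argument then kills every triple cup product in $H^3(\tld Y;\Z)$. This contradicts the retraction $\tld Y\to T^3$ (which exists because $T^3\hookrightarrow\tld Y$ is a $\pi_1$-isomorphism onto a $K(\pi,1)$), since that retraction pulls back a nonzero $a\cup b\cup c$. The contradiction is global and cohomological, not a per-fiber intersection count; your passing reference to the ``cup-length-three peripheral cover'' points at exactly the right object, but it has to be deployed this way rather than as a local disjointness statement.
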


We postpone the proof momentarily to the show how Theorem 2 follows.

\begin{proof}[Proof of Theorem 2]
	$X$ is (diffeomorphic to) a product $T^2 \times D^2$ and choose an identification. There are (many) ways $k_i$ to re-embed $T^2 \times 0 \overset{k_1}{\hookrightarrow}T^2 \times D^2$ the torus into $T^2 \times D^2$ so that the composition, call it $K_i$
	\begin{equation}
		T^2 \xrightarrow{k_i} T^2 \times D^2 \xrightarrow{\text{proj.}} T^2
	\end{equation}
	is a degree $i$ covering space projection with the property that $\pi_1(T^2) / K_{i\#}(\pi_1(T_2)) \cong \Z / i\Z \coloneqq \Z_i$. For example, if the $D^2$ is given polar coordinates $(\rho, \te_0)$, $0 \leq \rho \leq 1$, and $T^2$ is parametrized by $(\te_1, \te_2)$, then one may take $(\te_1, \te_2) \ra (i\te_1, \te_2, \frac{1}{2}, \te_1)$. The composition
	\begin{equation}
		f_i: T^2 \overset{k_1}{\hookrightarrow} X \xrightarrow{\mathrm{inc}_X} \R^4
	\end{equation}
	can be call an \emph{$i$th cabling} of $f_i$.

	Let $X_i \subset X$ be a closed tubular neighborhood of $f_i(T^2)$ and let $e_i$ denote the inclusion of $\de X_i \cong T^3$ into $\R^4$.

	Now consider one of the essential domains. say $D_1 \subset \pi^{-1}(p)$, and look within $D_1$ at $E \coloneqq \pi^{-1}(p) \cap X_i$. By perturbing $p \in \R^2$, if necessary we may assume that $p$ is both a regular value for $e_1$ and $e_i$ so $E_i \subset D$ is a compact sub-surface.

	\begin{lemma}\label{old22}
		There are at least $i$ components of $E_i$, call them $F_1, \dots, F_i$ so that $[F_i, \de] \neq 0 \in H_2 (X_i, \de;\Z)$.
	\end{lemma}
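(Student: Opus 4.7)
The plan is to derive the bound by intersection theory in $X \cong T^2 \times D^2$. First I would identify $H_2(X, \de X; \Z) \cong \Z$ and $H_2(X_i, \de X_i; \Z) \cong \Z$ via Poincaré--Lefschetz duality, in each case generated by a meridian disk of the corresponding core torus. By Lemma \ref{old21}, the class $m \coloneqq [D_1, \de] \in H_2(X, \de X; \Z)$ is a nonzero integer. Since $k_i$ was built so that its composition with the projection to $T^2$ is a degree-$i$ cover, we have $[f_i(T^2)] = i \cd [T^2 \times \{0\}]$ in $H_2(X; \Z)$. The intersection pairing then gives the algebraic intersection $D_1 \cd f_i(T^2) = m i$ in $X$.

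Next I would extract the geometric content. Since $p$ is a regular value, the fiber $\Pi \coloneqq \pi^{-1}(p)$ is a $2$-plane in $\R^4$ transverse to the 2-surface $f_i(T^2)$, so $\Pi \cap f_i(T^2)$ is a finite set of transverse points. Restricting to $D_1 \subset \Pi$ gives $D_1 \cap f_i(T^2) = \{q_1, \ldots, q_N\}$ with
\[
N \geq \abs{m i} \geq i,
\]
because the unsigned count dominates the absolute algebraic count and $\abs{m} \geq 1$.

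Finally, choose $X_i$ thin enough that $\Pi \cap X_i$ is a disjoint union of small disks $\Delta_1, \ldots, \Delta_N$, one around each $q_k$; this is possible by transversality of $\Pi$ with $f_i(T^2)$. At $q_k$, that same transversality identifies $\Delta_k$ with a meridian disk of the normal $D^2$-bundle of $f_i(T^2)$, so $[\Delta_k, \de] = \pm 1$ is a generator of $H_2(X_i, \de X_i; \Z)$. The components of $E_i = D_1 \cap X_i$ are then precisely those $\Delta_k$ whose center $q_k$ lies in $D_1$, of which there are $N \geq i$, and each is essential---which is the conclusion. The only delicate step is choosing $X_i$ narrow enough that $\Pi \cap X_i$ decomposes into disjoint meridional disks, but this follows automatically from transversality; no component-level cancellation can occur, because each $\Delta_k$ carries nonzero class individually even though signed cancellation among the $q_k$ could in principle reduce the algebraic total $m i$.
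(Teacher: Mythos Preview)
Your argument has a genuine gap at the step ``choose $X_i$ thin enough.'' In the logical flow of Theorem 2, the tubular neighborhood $X_i$ (equivalently, the embedding $e_i=\partial X_i$) must be fixed \emph{before} the adversarial diffeomorphism $d$ is chosen, since $2$-width$(e_i)$ is a minimum over all embeddings isotopic to $e_i$. Concretely, once $e_i$ is fixed, the adversary may pick $d$ so that the fiber $\pi^{-1}(p)=d^{-1}(\pi_0^{-1}(p))$ (which, incidentally, is an embedded copy of $\R^2$ but not a linear $2$-plane) wiggles arbitrarily inside $X_i$; a single component of $E_i$ may then swallow many of your transverse intersection points $q_k$. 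Your homological count survives only in aggregate: one gets $\sum_k [F_k,\partial]=m\,i$ in $H_2(X_i,\partial X_i;\Z)\cong\Z$, which by itself does not preclude all of the class being carried by a single component. So the purely homological/intersection-theoretic route does not separate the points into distinct components.

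This is exactly why the paper's proof invokes the $\Z_i$ covering structure rather than shrinking the tube. The point is that \emph{any} two intersection points lying in the same component $F_k$ of $E_i$ can be joined by a ``shortcut'' arc $\alpha\subset F_k\subset X_i$; closing up with an arc $\beta$ along the core $f_i(T^2)$ produces a loop $\delta\subset X_i$, whose class in $\pi_1(X)\cong\pi_1(T^2)$ is forced to lie in the index-$i$ subgroup $K_{i\#}\pi_1(T^2)$. Choosing the $i$ points so that the $\beta$-arcs between them represent the nontrivial cosets of $\Z_i$ then yields a contradiction. The crucial feature is that this argument is insensitive to the thickness of $X_i$, which is precisely what your transversality-and-shrink step cannot guarantee. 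If you want to repair your approach, you would need a $\pi_1$-level reason (not just an $H_2$-level one) that different $q_k$ cannot be connected inside $X_i\cap\pi^{-1}(p)$.
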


	\begin{proof}
		Further assuming $p$ is a regular value for the composition $T^2 \xrightarrow{f_i} X \xrightarrow{\pi} \R^2$, the number of inverse images of $p$, counted according to sign, must be $i$. In fact, since $\pi_1(T^2) / \mathrm{inc_{\#}}\pi_1(T^2) \cong \Z_i$ we may choose a simple closed curve (scc) downstairs $\gamma \subset T^2$ so that its inverse image $\lbar{\gamma}$, $\pi \circ f_i: \lbar{\gamma} \ra \gamma$, contains a scc $\tld{\gamma} \subset \lbar{\gamma}$ so that $\pi \circ f_i \big\vert_{\tld{\gamma}}: \tld{\gamma} \ra \gamma$ is homotopic to an $i$-fold covering map. Among the inverse images of $p$ there are $i$ points, $p_1, \dots, p_i$ with the property that the arc segment from $p_r$ to $p_s$, $0 \leq r < s \leq i$ maps to $\gamma$ degree $(s-r)$. Suppose, for a contradiction, $p_r, p_s \subset F_k$, $1 \leq r < s \leq i$, lie in the same $F_k$, $1 \leq k \leq i$.

		Then choose a "shortcut" arc $\alpha \subset F_k$ joining $p_r$ to $p_s$. Then the union $\delta$ of $\alpha$ with the (oriented) segment $\beta$ of $\tld{\gamma}$ running from $p_s$ to $p_r$ maps with degree $(s-r)$ to $\gamma$, contradicting $\pi_1(T^2) / K_{i\#} \pi_1(T^2) \cong \Z_i$, since $(s-r) \not\equiv 0$ mod $i$. Thus each $p_s$, $1 \leq s \leq i$, must lie in a distinct component, call them $F_1, \dots, F_i$ of $E_i$, proving the lemma.
	\end{proof}

	From here Theorem 2 is immediate, $\de E_i$ must contain at least $i$ scc, at least one belonging to $\de F_k$, $1 \leq k \leq i$.
\end{proof}

\begin{proof}[Proof of Lemma \ref{old21}]
	As a whole $[D] = 0 \in H_2(X, \de ; \Z)$. So if there is one essential component $D_1$ there must be others to cancel its class. To obtain a contradiction we therefore assume all $[D_i] = 0$.

	Consider $\pi^{-1}(p) \cap X \subset \pi^{-1}(p)$; picture it as a black compact subsurface in an otherwise white plane. An innermost circle which bounds a white disk must be null homotopic in $e_1(T^3)$ since the peripheral group injects into $\pi_1(Y)$, the group of the complement. An innermost circle bounding a black disk might be trivial, but in any case is an element $w \in \ker(\pi_1(\de X) \ra \pi_1(X)) \cong \Z$, generated by the meridional circle. (This situation is exceptionally simple since $\pi_1(T^3) \cong \pi_1 \de X$ is abelian.) It follows from our assumption that for each component $D_i$, $[D_i] = 0 \in H_2 (X, \de; \Z)$ that $w = 0$ (for each circle bounding a black disk) since $w$ is in the image of the boundary map
	\begin{equation}\label{boundary}
		H_2(X, \de;\Z) \xrightarrow{\de} H_1(\de X; \Z)
	\end{equation}

	This implies that all next-to-innermost circles are trivial in $\pi_1(\de X)$. There are two cases: if the color immediately to the inside of such a circle is white then any black spots within can be "cut off" by a null homotopy near $\de X$, staying on the white (i.e. $Y$) side, using the fact that $w = 0$. Similarly, if the color immediately to the inside is black, since any white spots must have trivial boundary from the injectivity of the peripheral subgroup. Now our homological assumption (that the lemma is false) together with line (\ref{boundary}) shows that the outer boundary also must be trivial, continuing in this way (inducting on the depth of the nesting pattern of the circles in $\pi^{-1}(p) \cap \de X$) we conclude that \emph{all} such circles are trivial in $\pi_1(\de X) \cong \pi_1(T^3) \cong \Z^3$. We have established the following:

	\begin{claim}
		If Lemma \ref{old21} is false then for every regular vlaue $p$ all the scc of $\pi^{-1}(p) \cap \de X$, which of course are precisely the scc of $\pi^{-1}(p) \cap \de Y$ since $\de X = \de Y$, are trivial in $\pi_1(\de X) \cong \pi_1(\de Y) \cong \pi_1 T^3 \cong \Z^3$.
	\end{claim}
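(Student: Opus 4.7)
The plan is to prove the Claim by induction on the nesting depth of the disjoint simple closed curves of $\pi^{-1}(p)\cap\de X$ in the fiber plane $\pi^{-1}(p)\cong\R^2$, establishing at each stage that every curve at that depth is null-homotopic in $\de X\cong T^3$. For the base case, each innermost circle $c$ bounds an open disk $\Delta\subset\R^2$ colored either entirely white (contained in $Y$) or entirely black (contained in $X$). In the white case, $c$ bounds $\Delta\subset Y$ and is null-homotopic there; the Litherland--Asano property $\pi_1(T^3)\hookrightarrow\pi_1(Y)$ then promotes this to null-homotopy in $T^3$. In the black case, $c$ lies in $\ker(\pi_1(T^3)\to\pi_1(X))$, which is $\Z$ generated by the meridian $\mu$ of $f_1(T^2)$ (using $X\cong T^2\times D^2$), so $[c]=w\mu$ for some $w\in\Z$. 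Because $c$ is innermost, $\Delta$ coincides with an entire disk component $D_i$ of $\pi^{-1}(p)\cap X$, and the reductio hypothesis $[D_i,\de]=0\in H_2(X,\de)$ pushes through the boundary map (\ref{boundary}) to give $w\mu=0\in H_1(T^3)\cong\Z^3$, forcing $w=0$.

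For the inductive step, once every curve at depth $\leq k$ is known trivial in $\pi_1(T^3)$, each such curve bounds a disk in $\de X$; a small isotopy of the fiber-plane picture supported near $\de X$ can absorb the innermost $k$-layer into its adjacent region. After absorption, the former depth-$(k{+}1)$ circles become innermost in the simplified picture, and the base-case dichotomy applies again. Two bookkeeping facts make the reduction work: the isotopy is localized near $\de X$, so it does not alter the $H_2(X,\de)$-classes of the remaining components; and the abelianness of $\pi_1(T^3)$ removes any ordering ambiguity in the boundary-circle contributions of a single $D_i$.

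The main obstacle I expect is this inductive absorption step --- specifically, realizing each null-homotopy of an inner trivial circle as a genuine ambient isotopy so that the modified configuration remains a transverse regular fiber of a topologically stable map (as required to keep invoking (\ref{boundary})), while simultaneously verifying that the peripheral-injectivity and kernel-of-$\pi_1(X)$ inputs apply unchanged to the simplified picture. Once the Claim is in hand, one still has to close out Lemma~\ref{old21} itself, which will presumably exploit the homological richness of the peripheral cover $\tld{Y}$: the Claim forces every fiber class $[\pi^{-1}(p)\cap T^3]\in H_1(T^3)$ to vanish, and this global triviality should be incompatible with the length-3 cup product structure transmitted from $T^3$ into $\tld{Y}$.
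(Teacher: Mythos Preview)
Your base case is exactly the paper's: innermost white circles are trivial by peripheral injectivity, and innermost black disks give $w\mu$ with $w=0$ via the boundary map $H_2(X,\de)\to H_1(\de X)$ applied to the hypothesis $[D_i,\de]=0$.

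Where you diverge is the inductive step. You propose an ambient isotopy that absorbs the already-trivial innermost layer so that the next layer becomes innermost and the base case reruns; you then correctly flag the difficulty of keeping the modified picture a genuine transverse fiber. The paper sidesteps this entirely by arguing directly at each depth, with no isotopy. For a next-to-innermost circle $c$ with white immediately inside, the inner black disks (already shown trivial) are ``cut off'' by null-homotopies in $\de X$ pushed slightly to the $Y$ side; gluing these to the white planar region produces a \emph{singular} disk in $Y$ with boundary $c$, so $c$ is trivial in $\pi_1(Y)$ and hence in $\pi_1(T^3)$. For $c$ with black immediately inside, the black region is an entire component $D_j$ whose inner boundary circles are already trivial; the hypothesis $[D_j,\de]=0$ plus the boundary map \eqref{boundary} forces the sum of all boundary classes of $D_j$ to vanish in $H_1(\de X)\cong\Z^3$, and since the inner ones vanish so does $[c]$. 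This pattern then iterates on nesting depth.

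So your obstacle is self-imposed: you never need the simplified picture to be a regular fiber, because you never need to re-invoke the black base case on a \emph{new} configuration. The homological input $[D_j,\de]=0$ is applied once per original black component, and the white-side argument only needs a continuous (not embedded, not fiberwise) disk in $Y$. Replacing your isotopy step with these two direct moves completes the induction cleanly.
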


	We now explore, in the spirit of \cite{fh}, the homological implication of such \emph{triviality} and find a contradiction. For each regular $p$, $Y_p \coloneqq \pi^{-1}(p) \cap Y$ is a planar domain of finite type and hence homotopy equivalent to a finite 1-complex $Y_p$. $\pi_1(Y_p)$ is normally generated by its boundary scc so triviality implies that the inclusion $Y_p \subset Y$ is homotopically trivial.

	Everything of interest lies within $B_r$, the ball of radius $r$ around the origin of $\R^4$. So, with changing notations, let us restrict all maps to that ball and replace $Y$ with $Y \cap B_r$. In this way everything is compact.

	Thus, for each regular $p$, $Y_p$ has a regular neighborhood $\mathcal{N}(Y_p)$ so that $\mathcal{N}(Y_p) \ra Y$ is also homotopically trivial. So for each regular $p$ there is an open neighborhood $U_p$ of $p$ so that for all $p^\pr \in U_p$, $Y_{p^\pr} \subset \mathcal{N}(Y_p)$. $\{U_p, p \text{ regular}\}$ provides an open cover of $B_r^2$, the ball of radius $r$ in $\R^2$. By compactness there is a finite subcover $\{U_q\}$, $q$ belonging to a finite set of regular values. For each $q$, the preimages $P_q \coloneqq \pi^{-1}(U_q) \cap Y$ are homotopically trivial in $Y$, since they factor through the neighborhood $\mathcal{N}(Y_p)$.

	Now give $B_r^2$ a fine smooth handle decomposition $\mathcal{H}$ with the union of $i$-handles denoted $h_i$. Fine means that each $i$-handle of $\mathcal{H}$ lies in some $U_q$, for $i = 0,1,2$. Thus fineness guarantees that $Y_i \coloneqq \pi^{-1}(h_i) \cap Y \subset Y$ is null homotopic in $Y$, $i = 0,1,2$.

	Let $\tld{Y}$ denote the cover of $Y$ associated with the peripheral subgroup $\pi_1(\de Y) \cong \Z^3$. Because each $Y_i$, $0 \leq i \leq 2$ is null homotopic in $Y$, each component of each $Y_i$ lifts to $\tld{Y}$ (in $\pi_1 (\de Y)$ ways). Let $\tld{Y}_i$ denote the preimage of $Y_i$ in $\tld{Y}$; it is a union of $\pi_1(\de Y)$-many copies of $Y_i$. Each component of $\tld{Y}_i$ is null homotopic in $\tld{Y}$. In particular, the $\de$ map below is onto.
	\begin{equation}\label{de-map}
		H_2(\tld{Y}, \tld{Y}_i; \Z) \xrightarrow{\de} H_1(\tld{Y}_i; \Z) \xrightarrow{0} H_1(\tld{Y}; \Z)
	\end{equation}
	since the next map in the exact sequence is zero.

	The columns below are exact sequences from the universal coefficient theorem and the map $\delta$ being the hom dual of $\de$ on line (\ref{de-map}) is an injection.

	\begin{center}
	\begin{tikzpicture}
		\node at (-2,3) {0};
		\node at (2, 3) {0};
		\node at (-2,2.5) {$\downarrow$};
		\node at (2,2.5) {$\downarrow$};
		\node at (-2,2) {$\mathrm{Ext}_Z^1(H_0(\tld{Y}_i;\Z),\Z)$};
		\node at (2,2) {$\mathrm{Ext}_Z^1(H_1(\tld{Y}, \tld{Y}_i;\Z),\Z)$};
		\node at (-2,1.5) {$\downarrow$};
		\node at (2,1.5) {$\downarrow$};
		\node at (-2,1) {$H^1(\tld{Y}_i;\Z)$};
		\node at (2,1) {$H^2(\tld{Y},\tld{Y}_i;\Z)$};
		\node at (0,1.1) {$\overset{\delta}{\longrightarrow}$};
		\node at (-2, 0.5) {$\downarrow$};
		\node at (2, 0.5) {$\downarrow$};
		\node at (-2,0) {$\mathrm{Hom}(H_1(\tld{Y}_i;\Z),\Z)$};
		\node at (2.2,0) {$\mathrm{Hom}(H_2(\tld{Y},Y_i;\Z),\Z)$};
		\node at (0,0.1) {$\overset{\de^\ast}{\longrightarrow}$};
		\node at (-2, -0.5) {$\downarrow$};
		\node at (2, -0.5) {$\downarrow$};
		\node at (-2,-1) {0};
		\node at (2,-1) {0};
		\node at (-4,0) {$\ra$};
		\node at (-4.5,0) {0};
	\end{tikzpicture}
	\end{center}

	The upper left Ext vanishes since $H_0$ is torsion free, implying that the coboundary $\delta$ is also an injection. Thus the cohomology sequence of the pair $(\tld{Y}, \tld{Y}_i)$ factors through the indicated zero.

	\begin{center}
	\begin{tikzpicture}
		\node at (0,0) {$H^1(\tld{Y}, \tld{Y}_i;\Z) \rightarrow H^1(\tld{Y};\Z) \rightarrow H^1(\tld{Y}_i;\Z) \overset{\delta}{\rightarrow} H^2(\tld{Y}, \tld{Y}_i;\Z)$};
		\draw [->] (-0.9, 0.3) to (-0.2, 0.6);
		\draw [->] (0.2, 0.6) to (0.9, 0.3);
		\node at (0,0.6) {0};
	\end{tikzpicture}
	\end{center}

	This means that the first arrow is onto, permitting us to make a Lusternik-Schnirelmann style argument. Let $\alpha$, $\beta$, and $\gamma \in H^1(\tld{Y};\Z)$. Choose cochain representatives $\lbar{\alpha}$ for $\alpha$ vanishing on $\tld{Y}_0$ (i.e. the image of a representative from the leftmost module $H^1(\tld{Y}, Y_0; \Z)$), $\lbar{\beta}$ for $\beta$ vanishing on $\tld{Y}_1$, and $\lbar{\gamma}$ for $\gamma$ vanishing on $\tld{Y}_2$. We may compute $\alpha \cup \beta \cup \gamma \in H^3(\tld{Y};\Z)$ using any representatives we like. Using $\lbar{\alpha}$, $\lbar{\beta}$, $\lbar{\gamma}$ we find that $\alpha \cup \beta \cup \gamma$ lies in the image of
	\[
	H^3(\tld{Y}, \bigcup_{i=0}^2 \tld{Y}_i; \Z) \ra H^3(\tld{Y};\Z)
	\]
	but $\cup_{i=0}^2 \tld{Y}_i = \tld{Y}$ so the first module vanishes and $\alpha \cup \beta \cup \gamma = 0 \in H^3(\tld{Y}_i;\Z)$.

	By construction, the inclusion $\de \tld{Y} \overset{\text{inc.}}{\hookrightarrow} \tld{Y}$ induces an isomorphism on fundamental groups so the only obstruction to constructing a retraction $r: \tld{Y} \ra \de \tld{Y}$, $r \circ \text{inc.} = \id_{\de \tld{Y}}$, lies in $H^3(\tld{Y}; \pi_2(\de \tld{Y}))$. But $\de \tld{Y} \cong T^3$, a $K(\pi,1)$ so the obstruction vanishes and the retraction $r$ exists. Let $a$, $b$, $c \in H^1 (\de \tld{Y};\Z)$ be given by the respective projections to the three cirlce factors of $T^3 \cong \de \tld{Y}$,
	\begin{equation}
		a \cup b \cup c = 1 \in H^3(\de \tld{Y}; \Z) \cong \Z
	\end{equation}

	Now set $\alpha = r^\ast(a)$, $\beta = r^\ast(b)$, and $\gamma = r^\ast(c)$. We have $\mathrm{inc}^\ast(\alpha) = a$, $\mathrm{inc}^\ast(\beta) = b$, and $\mathrm{inc}^\ast(\gamma) = c$, so $\mathrm{inc}^\ast(\alpha \cup \beta \cup \gamma) = \mathrm{inc}^\ast(0)$. So in $H^3(\de \tld{Y}, \Z)$ we find
	\begin{equation}
		0 = \mathrm{inc}^\ast(0) = \mathrm{inc}^\ast(\alpha \cup \beta \cup \gamma) = \mathrm{inc}^\ast(\alpha) \cup \mathrm{inc}^\ast(\beta) \cup \mathrm{inc}^\ast(\gamma) = a \cup b \cup c = 1
	\end{equation}

	This contradiction proves Lemma \ref{old21}.
\end{proof}

\section{2-width$(M^3)$}

We prove Theorem 1. Let $M$ be a closed 3-manifold and $f:M \ra \R$ a Morse function. The regular levels are surfaces which we denote by $\Sigma_x = f^{-1}(x)$. Now lift $f$ to $g$ to obtain a commutative diagram

\begin{center}
	\begin{tikzpicture}[scale=1.3]
		\node at (0,0) {$M$};
		\node at (0.8,0.1) {$\xrightarrow{\makebox[1cm]g}$};
		\node at (1.7,0) {$\mathbb{R}^2$};
		\node at (2.6,0.1) {$\xrightarrow{\makebox[1cm]{$\pi_2$}}$};
		\node at (3.5,0) {$\mathbb{R}^1$};
		\node at (1.7,-0.6) {$\Big\downarrow$};
		\node at (1.7,-1.2) {$\mathbb{R}^1$};
		\node at (1.95,-0.65) {$\pi_1$};
		\draw [->] (0.2, -0.2) -- (1.4, -1.1);
		\node at (0.6,-0.8) {$f$};
		\node at (2.75,-0.35) {$(x,y) \mapsto y$};
		\node [rotate=270] at (2.4,-0.8) {$\mapsto$};
		\node at (2.4,-1.1) {$x$};
	\end{tikzpicture}
\end{center}

Clearly if $(x,y) \in \R^2$ is a regular value of $g$ then $\pi_1(x,y) = x$ is a regular value of $f$. Generically $g$ will have the property that for all regular $x$, $g \big\vert_{\Sigma_x}$ is Morse except for isolated $\{x_i\}$ where $g \big\vert_{\Sigma_x}$ has a generic Cerf transition (birth/death of canceling pair). What we do in this section is construct $g$ very carefully so that $g^{-1}$(regular$(x,y)$) consists of at most two circles.

Around each critical point $x_c \in \R$ of $f$ we will require $g$ to assume a canonical form, illustrated in Figure 3.1 by drawing the required level sets of $g$ near the $\Sigma_{x_c}$.

\begin{figure}[!ht]
	\centering
	\begin{tikzpicture}[scale=0.35]
		\draw (0,0) circle (3);
		\draw [dashed] (3,0) arc (0:180:3 and 0.6);
		\draw (3,0) arc (0:-180:3 and 0.6);
		\draw [dashed] (1.8, -2.4) arc (0:180:1.8 and 0.3);
		\draw (1.8, -2.4) arc (0:-180:1.8 and 0.2);
		\draw [dashed] (1.8, 2.4) arc (0:180:1.8 and 0.2);
		\draw (1.8, 2.4) arc (0:-180:1.8 and 0.3);

		\draw [dashed] (0, -3.2) to (0, -4.9);
		\draw (0, -4.9) ellipse (1.5 and 0.5);
		\draw (0.2, -6.5) arc (100:260:0.5 and 0.75);
		\draw (0.02, -6.6) arc (80:-80:0.3 and 0.65);
		\draw (-1.48, -4.8) arc (120:240:1.4 and 2.8);
		\draw (1.48, -4.8) arc (60:-60:1.4 and 2.8);
		\draw (-1.48, -9.65) .. controls (-1.1, -10.2) .. (-1.48, -10.8);
		\draw (0.2, -12.5) arc (100:260:0.5 and 0.75);
		\draw (0.02, -12.6) arc (80:-80:0.3 and 0.65);
		\draw (-1.48, -10.8) arc (120:240:1.4 and 2.8);
		\draw (1.48, -10.8) arc (60:-60:1.4 and 2.8);
		\draw (1.48, -9.65) .. controls (1.1, -10.2) .. (1.48, -10.8);
		\draw (-1.5, -15.55) arc (180:360:1.5 and 0.5);
		\draw [dashed] (-1.5, -15.55) arc (180:0:1.5 and 0.4);
		\draw (-2.18, -7.2) arc (180:360:0.98 and 0.3);
		\draw [dashed] (-2.18, -7.2) arc (180:0:0.98 and 0.3);
		\draw (0.27, -7.2) arc (180:360:0.95 and 0.3);
		\draw [dashed] (0.27, -7.2) arc (180:0:0.95 and 0.3);
		\draw (-2.18, -13.2) arc (180:360:0.98 and 0.3);
		\draw [dashed] (-2.18, -13.2) arc (180:0:0.98 and 0.3);
		\draw (0.27, -13.2) arc (180:360:0.95 and 0.3);
		\draw [dashed] (0.27, -13.2) arc (180:0:0.95 and 0.3);
		\draw (-1.63, -11) arc (180:360:1.63 and 0.4);
		\draw [dashed] (-1.63, -11) arc (180:0:1.63 and 0.35);
		\draw (-1.63, -9.4) arc (180:360:1.63 and 0.35);
		\draw [dashed] (-1.63, -9.4) arc (180:0:1.63 and 0.4);
		\draw [dashed] (0, -16.2) to (0, -17.7);

		\draw [dashed] (0, -18.2) to (0, -19.9);
		\draw (0, -19.9) ellipse (1.5 and 0.5);
		\draw (0.2, -21.5) arc (100:260:0.5 and 0.75);
		\draw (0.02, -21.6) arc (80:-80:0.3 and 0.65);
		\draw (-1.48, -19.8) arc (120:240:1.4 and 2.8);
		\draw (1.48, -19.8) arc (60:-60:1.4 and 2.8);
		\draw (-1.48, -24.65) .. controls (-1.1, -25.2) .. (-1.48, -25.8);
		\draw (0.2, -27.5) arc (100:260:0.5 and 0.75);
		\draw (0.02, -27.6) arc (80:-80:0.3 and 0.65);
		\draw (-1.48, -25.8) arc (120:240:1.4 and 2.8);
		\draw (1.48, -25.8) arc (60:-60:1.4 and 2.8);
		\draw (1.48, -24.65) .. controls (1.1, -25.2) .. (1.48, -25.8);
		\draw (-1.5, -30.55) arc (180:360:1.5 and 0.5);
		\draw [dashed] (-1.5, -30.55) arc (180:0:1.5 and 0.4);
		\draw (-2.18, -22.2) arc (180:360:0.98 and 0.3);
		\draw [dashed] (-2.18, -22.2) arc (180:0:0.98 and 0.3);
		\draw (0.27, -22.2) arc (180:360:0.95 and 0.3);
		\draw [dashed] (0.27, -22.2) arc (180:0:0.95 and 0.3);
		\draw (-2.18, -28.2) arc (180:360:0.98 and 0.3);
		\draw [dashed] (-2.18, -28.2) arc (180:0:0.98 and 0.3);
		\draw (0.27, -28.2) arc (180:360:0.95 and 0.3);
		\draw [dashed] (0.27, -28.2) arc (180:0:0.95 and 0.3);
		\draw (-1.63, -26) arc (180:360:1.63 and 0.4);
		\draw [dashed] (-1.63, -26) arc (180:0:1.63 and 0.35);
		\draw (-1.63, -24.4) arc (180:360:1.63 and 0.35);
		\draw [dashed] (-1.63, -24.4) arc (180:0:1.63 and 0.4);
		\draw [dashed] (0, -31.2) to (0, -32 .7);

		\node at (4.7,1) {$\xrightarrow{\text{index 3}}$};
		\node at (4.7,-1) {$\xleftarrow{\text{index 0}}$};
		\node at (9.2,0) {\Huge{$\varnothing$}};
		\node at (4.7,-2.5) {(a)};
		\node at (4.7,-9.4) {$\xrightarrow{\text{index 2}}$};
		\node at (4.7,-11) {$\xleftarrow{\text{index 1}}$};
		\node at (4.7,-15.3) {(b)};
		\node at (4.7,-24.4) {$\xrightarrow{\text{index 2}}$};
		\node at (4.7,-26) {$\xleftarrow{\text{index 1}}$};
		\node at (4.7,-30.3) {(c)};

		\draw (9.4,-27.5) arc (100:260:0.5 and 0.75);
		\draw (9.22,-27.6) arc (80:-80:0.3 and 0.65);
		\draw (7.72,-25.8) arc (120:240:1.4 and 2.8);
		\draw (10.68,-25.8) arc (60:-60:1.4 and 2.8);
		\draw (7.7,-30.55) arc (180:360:1.5 and 0.5);
		\draw [dashed] (7.7,-30.55) arc (180:0:1.5 and 0.4);
		\draw (7.02,-28.2) arc (180:360:0.98 and 0.3);
		\draw [dashed] (7.02,-28.2) arc (180:0:0.98 and 0.3);
		\draw (9.47,-28.2) arc (180:360:0.95 and 0.3);
		\draw [dashed] (9.47,-28.2) arc (180:0:0.95 and 0.3);
		\draw [dashed] (9.2,-31.2) to (9.2, -32 .7);
		\draw (9.2,-25.9) ellipse (1.5 and 0.5);

		\draw [dashed] (9.2, -18.2) to (9.2, -19.9);
		\draw (9.2, -19.9) ellipse (1.5 and 0.5);
		\draw (9.4, -21.5) arc (100:260:0.5 and 0.75);
		\draw (9.22, -21.6) arc (80:-80:0.3 and 0.65);
		\draw (9.2-1.48, -19.8) arc (120:240:1.4 and 2.8);
		\draw (10.68, -19.8) arc (60:-60:1.4 and 2.8);
		\draw (9.2-2.18, -22.2) arc (180:360:0.98 and 0.3);
		\draw [dashed] (9.2-2.18, -22.2) arc (180:0:0.98 and 0.3);
		\draw (9.47, -22.2) arc (180:360:0.95 and 0.3);
		\draw [dashed] (9.47, -22.2) arc (180:0:0.95 and 0.3);
		\draw (10.7, -24.62) arc (-10:-170:1.52 and 0.5);
		\draw [dashed] (10.7, -24.62) arc (10:170:1.52 and 0.5);

		\draw [dashed] (9.2, -3.2) to (9.2, -4.9);
		\draw (9.4, -12.5) arc (100:260:0.5 and 0.75);
		\draw (9.22, -12.6) arc (80:-80:0.3 and 0.65);
		\draw (9.2-1.48, -10.8) arc (120:240:1.4 and 2.8);
		\draw (10.68, -10.8) arc (60:-60:1.4 and 2.8);
		\draw (9.2-1.5, -15.55) arc (180:360:1.5 and 0.5);
		\draw [dashed] (9.2-1.5, -15.55) arc (180:0:1.5 and 0.4);
		\draw (9.2-2.18, -13.2) arc (180:360:0.98 and 0.3);
		\draw [dashed] (9.2-2.18, -13.2) arc (180:0:0.98 and 0.3);
		\draw (9.47, -13.2) arc (180:360:0.95 and 0.3);
		\draw [dashed] (9.47, -13.2) arc (180:0:0.95 and 0.3);
		\draw [dashed] (9.2, -16.2) to (9.2, -17.7);
		\draw (10.7, -10.8) arc (-10:-170:1.51 and 0.5);
		\draw (9.2, -4.9) ellipse (1.5 and 0.5);
		\draw (9.2-1.48, -4.8) arc (120:240:1.4 and 3.47);
		\draw (10.67, -4.8) to [out=-60, in=55] (11.1, -7.6) to [out=235, in=-55] (10.5, -7.6) to [out=125, in=0] (9.7, -6.6);
		\draw (9.7, -6.6) arc (90:270:0.6 and 1.57);
		\draw (9.7, -9.75) to [out=0, in=225] (10.3, -8.9) to [out=45, in=135] (11.1,-8.9) to[out=-45, in=50] (10.7, -10.8);
		\draw (9.17, -8.9) arc (0:-180:1.03 and 0.3);
		\draw [dashed] (9.17, -8.9) arc (0:180:1.03 and 0.3);
		\draw (9.17, -7.5) arc (0:-180:1.07 and 0.3);
		\draw [dashed] (9.17, -7.5) arc (0:180:1.07 and 0.3);
		\draw (11.32, -9.3) arc (0:-180:0.61 and 0.15);
		\draw [dashed] (11.32, -9.3) arc (0:180:0.61 and 0.15);
		\draw (11.3, -7.2) arc (0:-180:0.5 and 0.2);
		\draw [dashed] (11.3, -7.2) arc (0:180:0.5 and 0.2);

		\draw [dashed] (18.4, -3.2) to (18.4, -4.9);
		\draw (18.4, -4.9) ellipse (1.5 and 0.5);
		\draw (16.9, -4.9) .. controls (17.9, -9.1) .. (18.4-1.48, -10.8);
		\draw (18.6, -12.5) arc (100:260:0.5 and 0.75);
		\draw (18.42, -12.6) arc (80:-80:0.3 and 0.65);
		\draw (18.4-1.48, -10.8) arc (120:240:1.4 and 2.8);
		\draw (19.88, -10.8) arc (60:-60:1.4 and 2.8);
		\draw (19.9,-4.9) .. controls (18.9,-9.1) .. (19.88, -10.8);
		\draw (18.4-1.5, -15.55) arc (180:360:1.5 and 0.5);
		\draw [dashed] (18.4-1.5, -15.55) arc (180:0:1.5 and 0.4);
		\draw (18.4-1.63, -11) arc (180:360:1.63 and 0.4);
		\draw [dashed] (18.4-1.63, -11) arc (180:0:1.63 and 0.35);
		\draw [dashed] (18.4-2.18, -13.2) arc (180:0:0.98 and 0.3);
		\draw (18.67, -13.2) arc (180:360:0.95 and 0.3);
		\draw [dashed] (18.67, -13.2) arc (180:0:0.95 and 0.3);
		\draw (18.4-2.18, -13.2) arc (180:360:0.98 and 0.3);
		\draw (19.16, -9.1) arc (0:-180:0.76 and 0.25);
		\draw [dashed] (19.16, -9.1) arc (0:180:0.76 and 0.25);
		\draw [dashed] (18.4, -16.2) to (18.4, -17.7);
		\draw (19.43, -6.9) arc (0:-180:1.03 and 0.25);
		\draw [dashed] (19.43, -6.9) arc (0:180:1.03 and 0.25);
		\draw [->] (12.9, -10.2) to (14.9, -10.2);
	\end{tikzpicture}
	\caption{}\label{levelsets}
\end{figure}

Note that in the immediate r.h.s. of the \ref{levelsets}(b) transition $g \big\vert_{\Sigma_x}$ can be simplified via two Morse cancellations. These should be implemented by suitable choice of $g$ on the lower-genus side (r.h.s.) of the transition.

The lift $g$ must be carefully chosen in the regions $R_i \subset \R$ between critical points, $R_i = f^{-1}(x_{e_i}, x_{e_{i+1}})$ to preserve the two-loop property (tlp). We explain how to do this next.

The 2D mapping class groups (MCG) are generated by a finite list of Dehn twists about the simple closed curves (scc) $\gamma$ shown in Figure \ref{twists} in reference to the standard height function. It suffices to explain how $g$ may be constructed on $\Sigma \times [0,1]$, to interpolate between the \emph{before} and \emph{after} height function $h \coloneqq h_0$ and $h_1 = D_\gamma \circ h$, where $D_\gamma$ is Dehn twist about $\gamma$, while retaining the tlp. $h$ is $g \big\vert_{\Sigma_x}$, at a generic value of $x$, far from any $x_{\text{critical}}$. We assume $h$ is a standard Morse function of $\Sigma_x$ with no Morse-cancelling handles present.

\begin{figure}[!ht]
	\centering
	\begin{tikzpicture}[scale=0.6]
		\draw (-1.5,0) arc (180:0:1.5 and 1.8);
		\draw (-1.5,0) to [out=-90, in=90] (-1.2,-1.7) to [out=-90,in=90] (-2.4,-4.4) to [out=-90, in=90] (-1.2,-7.1) to [out=-90,in=90] (-2.4,-9.8) to[out=-90,in=90] (-1.2,-12.5);
		\draw (1.5,0) to [out=-90, in=90] (1.2,-1.7) to [out=-90,in=90] (2.4,-4.4) to [out=-90, in=90] (1.2,-7.1) to [out=-90,in=90] (2.4,-9.8) to[out=-90,in=90] (1.2,-12.5);

		\draw (0.1, -3.6) arc (100:260:0.6 and 1);
		\draw (-0.09, -3.7) arc (80:-80:0.5 and 0.9);
		\draw (0.1, -9) arc (100:260:0.6 and 1);
		\draw (-0.09, -9.1) arc (80:-80: 0.5 and 0.9);
		\draw (0.1, 1) arc (100:260:0.6 and 1);
		\draw (-0.09, 0.9) arc (80:-80: 0.5 and 0.9);

		\draw (1.2, -1.7) arc (0:-180:1.2 and 0.25);
		\draw [dashed] (1.2, -1.7) arc (0:180:1.2 and 0.25);
		\draw (1.2, -7.1) arc (0:-180:1.2 and 0.25);
		\draw [dashed] (1.2, -7.1) arc (0:180:1.2 and 0.25);
		\draw (-0.09, -0.87) arc (90:-90:0.2 and 1.42);
		\draw [dashed] (-0.09, -0.87) arc (90:270:0.2 and 1.42);
		\draw (-0.09, -5.47) arc (90:-90:0.2 and 1.82);
		\draw [dashed] (-0.09, -5.47) arc (90:270:0.2 and 1.82);
		\draw (-0.09, -10.87) arc (90:5:0.2 and 1.82);
		\draw [dashed] (-0.09, -10.87) arc (90:175:0.2 and 1.82);

		\draw (-0.05, 1.8) arc (90:-90:0.1 and 0.43);
		\draw [dashed] (-0.05, 1.8) arc (90:270:0.1 and 0.43);
		\draw (-0.05,-0.1) ellipse (0.75 and 1.6);
		\draw (-0.05, -4.6) ellipse (1 and 1.75);

		\draw (2.4, -4.4) arc (0:-180:1.04 and 0.25);
		\draw [dashed] (2.4, -4.4) arc (0:180:1.04 and 0.25);

		\node at (5.5,-1.7) {\Large{$\gamma$'s}};
		\draw [->] (4.9, -1.5) to (0.1, 1.2);
		\draw [->] (4.9, -1.6) to (0.3, -0.6);
		\draw [->] (4.9, -1.7) to (1.1, -1.9);
		\draw [->] (4.9, -1.8) to (0.2, -2.5);
		\draw [->] (4.9, -1.9) to (0.8, -3.4);
		\draw [->] (4.9, -2) to (1.7, -4);

		\node [rotate=-30] at (2.6,0) {\tiny{type a}};
		\node [rotate=-15] at (2.2,-0.85) {\tiny{type b}};
		\node [rotate=5] at (2.1,-1.7) {\tiny{type c}};
		\node [rotate=8] at (1.4,-2.2) {\tiny{type d}};
		\node [rotate=20] at (2,-2.7) {\tiny{type b}};
		\node [rotate=35] at (2.4,-3.3) {\tiny{type e}};
		\node at (6.4,-4.3) {(one such assymmetric $\gamma$};
		\node at (6.95,-5) {is needed to generate MCG($\Sigma$))};
		\draw [->] (6.4,-3.7) to [out=90,in=-45] (2.9, -3.3);
		\node at (-10, 0) {\phantom{abcdef}};
	\end{tikzpicture}
	\caption{}\label{twists}
\end{figure}

The interpolation $\lbar{h}$ between $h_0$ and $h_1$ is constant when $\gamma$ is horizontal (types c and e); $\lbar{h} = h_0 \times \id = h_1 \times \id$. When $\gamma$ is vertical (types a, b, and d) we describe $\lbar{h}$ explicitly by drawing the level set of $\lbar{h} = \{h_t, 0 \leq t \leq 1\}$. We do this in Figure \ref{donut} and simultaneously count for the different level and different times $t$ how many global circles the levels we specify produce (which depends on boundary conditions external to the annulus $A_t$ in which the levels are drawn.) Before we draw the levels, note that there are only really two cases for $\gamma$. Types a and d can be displaced slightly so that the induced heigh function in a small annular neighborhood has the same topological level set pattern (Figure \ref{displace}) we see for type b:

\begin{figure}[!ht]
	\centering
	\begin{tikzpicture}
		\draw[pattern=horizontal lines] (0,0) circle [radius=1];
		\fill[color=white] (0,0) circle[radius=0.35];
		\draw (0,0) circle(0.35);
	\end{tikzpicture}
	\caption{}\label{donut}
\end{figure}

As far as global gluing is concerned (the boundary conditions) the level lines in type a and d are always glue up to a single circle whereas for type b the level sets (top to bottom) glue up to form first one circle, then two circles, then one circle. Figure \ref{displace} shows the $t$-evolution of level sets of $h_t$ on the annulus $A_t$, and how these complete globally (dashed lines) to closed level circles at generic $t \in [0,1]$.

\begin{figure}[!ht]
	\centering
	\begin{tikzpicture}
		\draw (0,0) circle (1.25) circle (0.25);
		\draw (-1.04, 0.7) to (1.04,0.7);
		\draw (-1.25,-0.12) to (-0.22, -0.12);
		\draw (0.22, -0.12) to (1.25,-0.12);
		\draw (-0.97,-0.8) to (0.97,-0.8);
		\draw (-0.74, 1) to (0.74, 1);
		\draw (-1.25, 0.12) to (-0.22, 0.12);
		\draw (0.22,0.12) to (1.25, 0.12);
		\draw [dashed] (-1.04, 0.7) .. controls (0.1,1.2) and (1.8,1.4) .. (1.04, 0.7);
		\draw [dashed] (-1.25,-0.12) .. controls (-0.4,0.4) and (2,0.5) .. (1.25, -0.12);
		\draw [dashed] (-1.25,0.12) .. controls (-0.4,0.7) and (2,0.8) .. (1.25,0.12);
		\draw [dashed] (-1.1,-0.6) .. controls (-0.2, 0) and (1.8,0.1) .. (1.1,-0.6);
		\draw [dashed] (-0.97, -0.8) .. controls (-0.2,-0.3) and (1.7,-0.2) .. (0.97, -0.8);
		\draw (-1.1, -0.6) to (1.1, -0.6);

		\node at (2.7,1.6) {a + d};
		\node at (2.7,0.9) {1 circle};
		\node at (2.7,0.1) {1 circle};
		\node at (2.7,-0.7) {1 circle};

		\draw (6,0) circle (1.25) circle (0.25);
		\draw (6-1.04, 0.7) to (7.04,0.7);
		\draw (6-1.25,-0.12) to (6-0.22, -0.12);
		\draw (6.22, -0.12) to (7.25,-0.12);
		\draw (6-0.97,-0.8) to (6.97,-0.8);
		\draw (6-0.74, 1) to (6.74, 1);
		\draw (6-1.25, 0.12) to (6-0.22, 0.12);
		\draw (6.22,0.12) to (7.25, 0.12);
		\draw (6-1.1, -0.6) to (7.1, -0.6);
		\draw [dashed] (6-1.04, 0.7) .. controls (6.1,1.2) and (7.8,1.4) .. (7.04, 0.7);
		\draw [dashed] (4.9, -0.6) .. controls (5.6,-0.2) and (7.7,0) .. (7.1, -0.6);
		\draw [dashed] (5,-0.8) .. controls (5.8,-0.4) and (7.6,-0.3) .. (7,-0.8);
		\draw [dashed] (4.75,-0.1) .. controls (5.5,0.4) and (5.9,0.3) .. (5.8,-0.1);
		\draw [dashed] (6.25,-0.1) .. controls (7,0.4) and (7.6,0.3) .. (7.25,-0.1);
		\draw [dashed] (4.75, 0.1) .. controls (5.5,0.7) and (6.2,0.5) .. (5.75, 0.1);
		\draw [dashed] (6.24,0.12) .. controls (7, 0.7) and (7.7, 0.5) .. (7.25, 0.12);

		\node at (8.7,1.6) {b};
		\node at (8.7,0.9) {1 circle};
		\node at (8.7,0.1) {2 circles};
		\node at (8.7,-0.7) {1 circle};
		\node at (0,-1.5) {$t=0$};
		\node at (6,-1.5) {$t=0$};

		\draw (0,-4) circle (1.25) circle (0.25);
		\draw (-0.74, -3) to (0.74, -3);
		\draw (-0.25, -4) to (-0.6, -4);
		\draw (0.25, -4) to (0.6, -4);
		\draw (-0.6,-4) arc (180:0:0.6);
		\draw (-1.25, -4) to (-0.9, -4);
		\draw (0.9, -4) to (1.25, -4);
		\draw (-0.7, -3.8) arc (170:10:0.7);
		\draw (-0.9,-4) .. controls (-0.75,-3.95) .. (-0.7, -3.8);
		\draw (0.9,-4) .. controls (0.73,-3.96) .. (0.675, -3.78);
		\draw (-1.23,-4.2) to (-0.82,-4.2);
		\draw (-0.57,-4.2) to (-0.15, -4.2);
		\draw (-0.82,-4.2) to [out=0,in=255] (-0.77,-4.1) to [out=75,in=180] (-0.695,-3.98) to [out=0,in=115] (-0.63,-4.1) to [out=-75,in=180] (-0.57,-4.2);
		\draw (1.23,-4.2) to (0.82,-4.2);
		\draw (0.57,-4.2) to (0.15, -4.2);
		\draw (0.82,-4.2) to [out=180,in=-75] (0.77,-4.1) to [out=115,in=0] (0.695,-3.98) to [out=180,in=75] (0.63,-4.1) to [out=-115,in=0] (0.57,-4.2);
		\draw (-1.23,-3.8) to (-0.95, -3.8);
		\draw (1.23,-3.8) to (0.95, -3.8);
		\draw (-0.75, -3.6) arc (170:10:0.75 and 0.65);
		\draw (-0.95,-3.8) .. controls (-0.8,-3.75) .. (-0.75,-3.6);
		\draw (0.95,-3.8) .. controls (0.8,-3.75) .. (0.73,-3.6);
		\draw (-1.1,-4.6) to (1.1,-4.6);
		\draw (-0.96, -4.8) to (0.96, -4.8);
		\draw (0.15,-3.8) arc (-20:200:0.17 and 0.12);

		\node at (2.7,1.6-4) {a + d};
		\node at (2.7,0.9-4) {1 circle};
		\node at (2.7,0.1-4) {2 circles};
		\node at (2.7,-4.7) {1 circle};

		\draw [decorate,decoration={brace,amplitude=3pt}] (1.8,-3.6) -- (1.8,-4.3);
		\node at (0,-5.5) {$t=1/2$};

		\node at (6,-2.4) {b};
		\node at (6,-3.1) {1 circle};
		\node at (6,-3.9) {2 circles};
		\node at (6,-4.7) {1 circle};
		\draw [decorate,decoration={brace,amplitude=3pt}] (5.1,-2.6) -- (5.1,-3.6);
		\draw [decorate,decoration={brace,amplitude=3pt}] (5.1,-3.63) -- (5.1,-4.17);
		\draw [decorate,decoration={brace,amplitude=3pt}] (5.1,-4.2) -- (5.1,-5.2);

		\node at (6,-6.4) {b};
		\node at (6,-7.1) {1 circle};
		\node at (6,-7.9) {2 circles};
		\node at (6,-8.7) {1 circle};
		\draw [decorate,decoration={brace,amplitude=3pt}] (5.1,-6.6) -- (5.1,-7.6);
		\draw [decorate,decoration={brace,amplitude=3pt}] (5.1,-7.63) -- (5.1,-8.17);
		\draw [decorate,decoration={brace,amplitude=3pt}] (5.1,-8.2) -- (5.1,-9.2);

		\draw (0,-8) circle (1.25) circle (0.25);
		\draw (-0.86, -8.9) to (0.86, -8.9);
		\draw (-0.86, -7.1) to (0.86, -7.1);
		\draw (0.15,-7.8) arc (-20:200:0.17 and 0.12);
		\draw (0.15,-8.2) arc (20:-200:0.17 and 0.12);
		\draw (-1.23,-7.8) to (-0.85,-7.8);
		\draw (1.23,-7.8) to (0.85,-7.8);
		\draw (-0.65,-7.6) arc (170:10:0.65 and 0.55);
		\draw (-0.85,-7.8) .. controls (-0.7,-7.75) .. (-0.65,-7.6);
		\draw (0.85,-7.8) .. controls (0.7,-7.75) .. (0.63,-7.6);

		\draw (-1.23,-8.2) to (-0.85,-8.2);
		\draw (1.23,-8.2) to (0.85,-8.2);
		\draw (-0.65,-8.4) arc (-170:-10:0.65 and 0.55);
		\draw (-0.85,-8.2) .. controls (-0.7,-8.25) .. (-0.65,-8.4);
		\draw (0.85,-8.2) .. controls (0.7,-8.25) .. (0.63,-8.4);

		\draw (-1.23,-8.1) to (-0.85,-8.1);
		\draw (1.23,-8.1) to (0.85,-8.1);
		\draw (-0.6,-8.25) arc (-170:-10:0.6 and 0.6);
		\draw (-0.85,-8.1) .. controls (-0.67,-8.15) .. (-0.6,-8.25);
		\draw (0.85,-8.1) .. controls (0.67,-8.15) .. (0.58,-8.25);
		\draw (1.25, -8) to (0.8,-8) to [out=180,in=45] (0.55, -8.1) to [out=225, in=90] (0.45, -8.3);
		\draw (0.45,-8.3) arc (0:-160:0.45 and 0.33);
		\draw (-0.42, -8.42) to [out=125,in=-90] (-0.58,-7.9);
		\draw (-0.58, -7.9) arc (180:90:0.58 and 0.4);
		\draw (0, -7.5) to [out=0, in=135] (0.3, -7.6) to[out=-45,in=0] (0.25, -8);
		\draw (-1.25, -7.9) to (-0.85,-7.9);
		\draw (-0.22, -7.89) to [out=160,in=90] (-0.45,-8.1) to [out=-90,in=180] (0,-8.52) to [out=0,in=-90] (0.45,-7.9) to [out=90,in=0] (0,-7.3) to[out=180,in=70] (-0.55,-7.6);
		\draw (-0.55,-7.6) .. controls (-0.64, -7.85) .. (-0.85,-7.9);
		\draw (-0.22,-8.1) arc (155:380:0.24 and 0.24);

		\node at (0,-9.5) {$t=1$};

		\node at (2.7,1.6-8) {a + d};
		\node at (2.7,-8) {1 circle};
		\draw [decorate,decoration={brace,amplitude=7pt}] (1.8,-6.7) -- (1.8,-9.3);
	\end{tikzpicture}
	\caption{}\label{displace}
\end{figure}

Inspecting Figure \ref{displace}, the interpolation $\lbar{h}$ has all generic preimages completing globally to either one or two circles.

The exceptional cases where two circles never occur are circle bundles over finite planar surfaces $F$ which degenerate to one point fibers over $\de F$. These are stated 3-manifolds.
\qed

\section{Thoughts on the Goeritz Group}

Let us define the genus $G$ Goeritz group $G_g$ to be $G_g = \pi_1(\text{Heegaard}_g)$ where $\text{Heegaard}_g$ is the space of genus $g$ Heegaard decompositions of $S^3$. $G_0 \cong \Z_2$, $G_1 \cong \Z_4$, and an explicit finite presentation of $G_2$ is given in \cite{}. In \cite{dehn} it is confirmed that five generators\footnote{Martin Scharlemann noticed in 2018 (private communication) that one of the proposed parallel generators is actually redundant being a word in three of the remaining four.} proposed to suffice for each $G_g$ in fact do generate $G_3$. For $g \geq 4$ it is not known if $G_g$ is finitely generated. This paper was motivated by a strategy for proving $G_g$ to be infinitely generated for $g \geq 4$. For every element $h \in G_g$, we describe below a closed 3-manfiold $M_h$ together with an embedding $e_h: M_h \hookrightarrow \R^4$.

\begin{thm}\label{old3}
	If the function $\mathrm{2}\text{-}\mathrm{width}(e_h)$, $h \in G_g$, diverges, then $G_g$ is infinitely generated.
\end{thm}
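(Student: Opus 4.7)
The plan is to prove the contrapositive: assuming $G_g = \langle g_1, \ldots, g_n \rangle$ is finitely generated, we argue that $2\text{-width}(e_h)$ is uniformly bounded over $h \in G_g$. Fix once and for all a basepoint configuration consisting of a standard genus-$g$ Heegaard surface $\Sigma_0 \subset S^3$ together with a fixed Morse height function. For each generator $g_i$, choose an embedding $\tilde{e}_i$ representing the isotopy class $e_{g_i}$ and realizing (up to $\epsilon$) the minimum $2\text{-width}(e_{g_i}) =: W_i$, with the additional property that near the endpoints $t=0,1$ of the movie the embedded surface and ambient height function agree with the fixed basepoint configuration. Let $W := \max_i W_i$.

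Given any $h \in G_g$, factor $h = g_{i_1}^{\epsilon_1} \cdots g_{i_k}^{\epsilon_k}$ and form the concatenated movie, reparameterizing each $\tilde{e}_{i_j}$ to the time subinterval $[(j-1)/k, j/k]$ (reversing direction when $\epsilon_j = -1$). Because all generator movies start and end at the same basepoint configuration, this concatenation yields a smooth embedding $e'_h$ representing the isotopy class $e_h$. Projecting to the $(t,\,\text{height})$ plane, every regular fiber sits inside exactly one sub-movie and so consists of at most $W$ circles. Hence $2\text{-width}(e_h) \leq W$ for every $h \in G_g$, contradicting the divergence hypothesis and forcing $G_g$ to be infinitely generated.

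The main obstacle is the \emph{basepoint normalization} step in the first paragraph: showing that each generator $g_i$ admits a representative embedding whose endpoints exactly match the fixed basepoint configuration, while keeping the 2-width within a bounded additive overhead of its minimax value. This amounts to constructing, for each $g_i$, a short auxiliary sweepout joining an arbitrary 2-width-efficient representative to the basepoint, with frame complexity bounded solely in terms of $\Sigma_0$ and $g$. One expects to handle this via Cerf-theoretic moves (births/deaths of canceling handle pairs) in the spirit of Section~3, prepending and appending such bounded-complexity transitions to each $\tilde{e}_i$ without inflating the overall 2-width beyond an additive constant $C_0$ (so that $W$ becomes $\max_i W_i + C_0$). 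A secondary point is that the assignment $h \mapsto e_h$ must be compatible with the group operation: the embedding $e_{h_1 h_2}$ is isotopic to the concatenation of $e_{h_1}$ and $e_{h_2}$, which should follow directly from the movie-theoretic construction of $M_h$ and $e_h$.
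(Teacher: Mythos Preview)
Your contrapositive-plus-concatenation strategy is exactly the paper's, but you have manufactured the very obstacle you worry about. The paper never attempts to realize the \emph{minimum} 2-width for each generator. It simply fixes, for each generator $g_i$, one concrete isotopy $\lbar{g}_i$ of $\Sigma_g$ in $\R^3$ beginning and ending at the basepoint Heegaard embedding, and lets $n_i$ be the maximum number of circles appearing in any regular height-level during that particular isotopy. With finitely many generators, $n = \max_i n_i$ is some finite number, full stop---no optimization is needed, hence no basepoint-normalization problem arises. Concatenating the $\lbar{g}_i$'s along subarcs of the circle $(\sin\te,\cos\te,0,0)$ in $\R^4$ then exhibits a representative of $e_h$ whose regular $(w,z)$-preimages contain at most $n+2$ circles (the ``$+2$'' arises because the word occupies only $0\le\te\le\pi$ while the identity occupies $\pi\le\te\le 2\pi$, and a generic $w$-slice meets both halves). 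Thus $2\text{-width}(e_h)\le n+2$ uniformly.

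There is also a conflation in your setup worth flagging: the objects you concatenate must be isotopies (movies) of $\Sigma_g$ in $\R^3$, not embeddings $\tld{e}_i$ of the mapping tori $M_{g_i}$ into $\R^4$. An embedding of $M_{g_i}$ isotopic to $e_{g_i}$ and realizing near-minimal 2-width need not carry any ``time'' coordinate or endpoint structure at all, so phrases like ``near the endpoints $t=0,1$'' and ``concatenated movie'' do not literally apply to such an object. The paper avoids this by working with isotopies in $\R^3$ throughout and forming the mapping-torus embedding only \emph{after} concatenation. Once you drop the attempt to optimize each $W_i$ and work directly with fixed basepoint isotopies $\lbar{g}_i$, both difficulties you flag evaporate and your argument becomes the paper's.
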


There is a related equivalence which we will return to.

\begin{thm}
	$G_g$ is infinitely generated iff the "frame complexity" $\mathrm{fc}(h)$ diverges over $h \in G_g$.
\end{thm}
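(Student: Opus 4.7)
The plan is to establish both directions of the equivalence by contrapositive; the easier implication is that $G_g$ being finitely generated forces $\mathrm{fc}$ to be bounded on $G_g$. Frame complexity, being a minimax (minimum over representing motions of the maximum over time of a per-frame complexity), automatically satisfies the sub-maximum inequality $\mathrm{fc}(h \cdot h') \leq \max\{\mathrm{fc}(h), \mathrm{fc}(h')\}$ (concatenate near-optimal motions representing $h$ and $h'$) and the symmetry $\mathrm{fc}(h^{-1}) = \mathrm{fc}(h)$ (reverse time). Consequently, if $G_g = \langle h_1, \ldots, h_k \rangle$ is finitely generated and $N = \max_i \mathrm{fc}(h_i)$, every element of $G_g$ has frame complexity at most $N$, so $\mathrm{fc}$ does not diverge.

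For the harder direction, assume $\mathrm{fc}$ is uniformly bounded by some $N$ on $G_g$, and let $\mathrm{Heegaard}_g^{\leq N} \subset \mathrm{Heegaard}_g$ denote the subspace consisting of Heegaard surfaces of frame complexity at most $N$. By hypothesis every element of $G_g$ is represented by a loop (based at the standard Heegaard surface) that lies entirely in $\mathrm{Heegaard}_g^{\leq N}$, so the inclusion induces a surjection $\pi_1(\mathrm{Heegaard}_g^{\leq N}) \twoheadrightarrow G_g$. It suffices to show the source is finitely generated. The strategy is to exhibit a finite combinatorial description of surfaces of frame complexity $\leq N$---realizing each as a word of length $\leq N$ in a fixed finite alphabet of elementary modifications of the standard Heegaard surface---so that only finitely many isotopy classes appear in $\mathrm{Heegaard}_g^{\leq N}$. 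This produces a finite 2-complex $K_N$ (whose 1-cells are elementary moves and whose 2-cells encode the obvious local relations among them) together with a natural map $K_N \to \mathrm{Heegaard}_g^{\leq N}$ that is surjective on $\pi_1$.

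The principal obstacle is a cellular-approximation statement: any continuous loop in $\mathrm{Heegaard}_g^{\leq N}$ must be shown homotopic, through loops remaining in $\mathrm{Heegaard}_g^{\leq N}$, to a concatenation of elementary moves coming from $K_N$. This is where the precise definition of frame complexity must be exploited, in the spirit of Cerf theory, where one uses genericity of one-parameter families to reduce to a bounded list of birth-death and crossing events; here one must verify that these elementary transitions can be performed while keeping every intermediate frame under complexity $N$. Granting this step, $G_g$ is a quotient of the finitely generated group $\pi_1(K_N)$, hence finitely generated, which establishes the remaining implication and completes the proof.
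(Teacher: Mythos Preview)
Your easy direction matches the paper: concatenation and time-reversal give $\mathrm{fc}(h\cdot h')\le\max\{\mathrm{fc}(h),\mathrm{fc}(h')\}$ and $\mathrm{fc}(h^{-1})=\mathrm{fc}(h)$, so a finite generating set bounds $\mathrm{fc}$ globally.

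For the hard direction you have located but not closed the essential gap. You write ``Granting this step'' for the cellular-approximation claim, and that step \emph{is} the content of the theorem in this direction. Your proposed mechanism---encode bounded-complexity surfaces as words of length $\le N$ in a finite alphabet of elementary moves and invoke Cerf-style genericity---is only a sketch, and it does not engage with the definition of frame complexity actually in force: $\mathrm{fc}(\alpha)$ is the least $n$ for which $\alpha$ is represented by a loop of PL Heegaard embeddings each lying in $\mathrm{Good}_n$, the space of PL embeddings with at most $n$ vertices and uniformly controlled triangle geometry and dihedral angles. (Incidentally, ``finitely many isotopy classes in $\mathrm{Heegaard}_g^{\le N}$'' is not the right finiteness to aim for; by Waldhausen there is only one isotopy class of genus-$g$ Heegaard surfaces in $S^3$, so the content must be at the level of path classes.)

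The paper's argument is different and more direct, and its key idea is compactness rather than a move calculus. First, $\mathrm{Good}_n$ is compact modulo translations of $\R^3$. From this one deduces that there is a larger $N$ such that every point of $\mathrm{Good}_n$ can be joined to the basepoint $\ast$ by a path in $\mathrm{Good}_N$. Now take any loop $\alpha$ representing an element of $G_g$ with all frames in $\mathrm{Good}_n$. Subdivide $\alpha$ into short segments and, at each division point, insert a path $\gamma_i$ in $\mathrm{Good}_N$ to $\ast$ followed immediately by $\gamma_i^{-1}$. This rewrites $\alpha$ as a product of based loops $\gamma_{i}\cdot(\text{segment}_i)\cdot\gamma_{i+1}^{-1}$ in $\mathrm{Good}_N$. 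Compactness of the relevant spaces of short paths then forces only finitely many homotopy classes of such ``units'' to occur, and these finitely many units generate $G_g$.

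So the idea you are missing is the compactness of the bounded-complexity embedding spaces, which delivers the required finiteness essentially for free once $\mathrm{Good}_n$ is set up with bounded geometry. Your $2$-complex/Cerf-transition route might be made to work, but it would require substantial additional argument that you have not supplied, whereas the compactness route avoids it entirely.
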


\begin{proof}[Proof of Theorem \ref{old3}]
	Given a loop $h$ of Heegaard surfaces $\Sigma_\te \subset \R^3$, $0 \leq \te \leq 1$, we may consider the normal bundle $S^1 \times \R^3$ to a circle embedded in $\R^4$ by $\te \mapsto (\sin(2\pi \te), \cos(2\pi\te), 0, 0)$ for $0 \leq \te \leq 1$. Trivializing the normal bundle $\{\Sigma_t, 0 \leq t \leq 1\}$ now defines a circular family of Heegaard surfaces in $\R^4$ which together form a 3-manifold $M_h$, the $h$-mapping torus of $\Sigma$, with an embedding $e_h: M_h \hookrightarrow \R^4$. Consider the projection $\pi: \R^4 \ra \R^2$ preserving the first and last coordinate, $\pi(w,x,y,z) = (w,z)$. If it is true that $G_g$ is finitely generated (fg) then for each generator $g_i$ we may realize it as a specific isotopy $\lbar{g}_i$ of $\Sigma_g$ beginning at ending at the same "base point" Heegaard embedding $\Sigma_g \subset \R^3$, shown in Figure \ref{twists}. The height coordinate $z$ on $\R^3$ is assumed to agree with the 4th coordinate of $\R^4$ under our embedding $S^1 \times \R^3 \hookrightarrow \R^4$. The isotopy $\lbar{g}_i$ will have at any given time a maximum number of circles in a generic level; let $n_i$ be the maximum of this number over all times during the isotopy. Finally let $n = \max_i n_i$. It is clear now that any word in $\{g_i, g^{-1}_i\}$ can be written as an isotopy in which the maximum number of circles in a generic level is $\leq n$.

	If the general element $h \in G_g$, $h = \mathrm{word}(g_i, g_i^{-1})$ of length $l$ is written out in terms of $\lbar{g}_i$ and $\lbar{g}_i^{-1}$ normal to the circle $(\sin(\te), \cos(\te), 0, 0)$, $0 \leq \te \leq \pi$, with the $j$th letter occupying $j\frac{\pi}{l} \leq \te \leq (j+1)\frac{\pi}{l}$, and with $\id$ for $\pi \leq \te \leq 2\pi$ (cyclic boundary conditions), then the maximum number of circles seen in a regular preimate $\pi^{-1}(p)$ will be $n+2$ ($n$ circles for $0 \leq \te \leq \pi$ and 2 circles for $\pi \leq \te \leq 2\pi$). Thus $n+2$ serves as a global upper bound to 2-width$(e_h)$. If 2-width instead diverges on $e_h$, $h \in G_g$, it must be that $G_g$ was not finitely generated, as assumed. This proves Theorem \ref{old3}.
\end{proof}

Since it is not yet clear if 2-width is the right tool to establish infinite generation of $G_g$ (if true), we offer Theorem \ref{old4} as a fully general criterion. It is easiest here to work with piecewise linear Heegaard surfaces. Given a smooth Heegaard $\Sigma^2 \subset \R^3$ after a homothetic rescaling, one can always select a Whitehead triangulation so that fixing the vertices and pulling the edges taut to straight line segments, results in a piecewise smooth isotopy and the final piecewise linear surface consists of triangles whose geometry comes from a compact parameter space $X$ and so that the dihedral angles are all in the range $[\pi - \epsilon, \pi + \epsilon]$ for a fixed small $\epsilon \geq 0$. For example, $X$ could be the set of triangles with edge length in the range $[1, 1.5]$. If we add to these the restriction that the open neighborhood of radius $\epsilon$ is PL normal bundle and fix $\epsilon$ above once and for all, we thus have a class of PL embeddings, call them $\mathrm{Good}_n$, consisting of $n$ or fewer vertices, so that

\begin{lemma}
	For a fixed $n$, the space of good Heegaard embeddings in $\R^3$, $\mathrm{Good}_n$ is compact, module translations of $\R^3$.
\end{lemma}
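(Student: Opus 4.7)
The plan is to quotient by translation by pinning one vertex to the origin, then to show the resulting moduli space is a finite union of closed bounded subsets of a finite-dimensional Euclidean space.

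First, normalize each embedding so that some distinguished vertex (say the lowest-indexed one in a fixed enumeration of the at most $n$ vertices) sits at the origin of $\R^3$. Since the 1-skeleton is connected, any vertex can be joined to the origin by an edge-path of length at most $n-1$, and each edge has length in $[1,1.5]$, so every vertex of a $\mathrm{Good}_n$ embedding lies in the closed ball $B$ of radius $1.5(n-1)$ about the origin. Hence once translations are factored out, the vertex set of a good embedding is parametrized by a point of the compact set $B^V$, where $V\le n$ is the number of vertices, and the PL embedding itself is recovered from this vertex data together with its abstract simplicial structure.

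Next, observe that the combinatorial type of the triangulated surface varies in a finite set: there are only finitely many abstract simplicial complexes on $\le n$ vertices. It therefore suffices to show, for each such combinatorial type $T$, that the subset $\mathrm{Good}_n(T)\subset B^V$ consisting of realizations that meet all the defining constraints is closed. The constraints are: (i) each triangle of $T$, when realized by the vertex positions, lies in the compact shape space $X$ (e.g.\ edge lengths in $[1,1.5]$); (ii) the dihedral angle along every interior edge lies in $[\pi-\epsilon,\pi+\epsilon]$; (iii) the PL $\epsilon$-normal bundle is embedded. Conditions (i) and (ii) are evidently closed semi-algebraic conditions on the vertex coordinates. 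For (iii), note that if a sequence of realizations all have embedded $\epsilon$-normal tubes, then any two points on the respective surfaces that are not on a common normal fibre are at distance $\ge 2\epsilon$; this separation property passes to the limit, and combined with (i) and (ii) (which keep the limit surface genuinely two-dimensional with controlled local geometry) gives that the limit realization also has an embedded $\epsilon$-normal bundle. Thus (iii) is closed as well.

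Putting this together, each $\mathrm{Good}_n(T)$ is a closed subset of the compact set $B^V$, hence is compact; and $\mathrm{Good}_n$ modulo translation is the union of these over the finitely many combinatorial types $T$, hence compact. The main technical point—and the only step requiring care rather than bookkeeping—is verifying that the embedded-tube condition (iii) is closed under $C^0$ (equivalently vertex-coordinate) limits; the key is that conditions (i) and (ii) prevent the limiting PL surface from degenerating or folding, so the uniform $\epsilon$-separation of distinct normal fibres is preserved.
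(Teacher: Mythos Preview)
Your proof is correct and supplies exactly the kind of argument the paper has in mind; the paper's own proof consists of the single word ``Straightforward,'' so you have simply filled in the details of the intended (and standard) compactness argument via finite combinatorial types and closed constraints on a bounded vertex configuration space.
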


\begin{proof}
	Straightforward.
\end{proof}

An immediate consequence of compactness is:

\begin{lemma}\label{old44}
	For every sufficiently large positive integer $n$ there is a larger positive integer $N$ so that every element of $\mathrm{Good}_n$ is connected to a standard "base point" triangulation $\ast$ within $\mathrm{Good}_N$ by a path in $\mathrm{Good}_N$. (A "path" allows re-triangulation but must be continuous in the Hausdorff topology.) \qed
\end{lemma}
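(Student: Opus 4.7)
The plan is to combine the compactness of $\mathrm{Good}_n$ from the previous lemma with Waldhausen's theorem (which guarantees that the space of smooth genus-$g$ Heegaard surfaces in $S^3$ is path-connected) and a local-to-global PL approximation argument that controls the vertex count uniformly.

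First, I would fix a base triangulation $\ast$ and, for each $\Sigma \in \mathrm{Good}_n$, smooth it to a surface $\Sigma^{\mathrm{sm}}$ by rounding corners inside its $\epsilon$-PL-normal bundle. The uniform dihedral-angle bound $[\pi-\epsilon,\pi+\epsilon]$ makes this rounding $C^0$-small and canonical in $\Sigma$. Waldhausen then provides a smooth ambient isotopy $\Phi_t^\Sigma$ from $\Sigma^{\mathrm{sm}}$ to $\ast^{\mathrm{sm}}$. Along this isotopy the principal curvatures and the normal-bundle injectivity radius of the surface stay bounded, so a PL approximation at a sufficiently fine mesh $\delta(\Sigma)$---with edge lengths in $[\delta, 1.5\delta]$---realizes the path as a PL isotopy in some $\mathrm{Good}_{N(\Sigma)}$. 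Successive triangulations along the path are joined by bounded sequences of Pachner bistellar moves, and the concatenation $\Sigma \leadsto \Sigma^{\mathrm{sm}} \leadsto \ast^{\mathrm{sm}} \leadsto \ast$ is continuous in the Hausdorff topology.

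Second, I would let $\Sigma$ vary over $\mathrm{Good}_n$. Compactness (modulo translations) implies that for each $\Sigma$ there is an open neighborhood $U_\Sigma \subset \mathrm{Good}_n$ on which the same smooth isotopy and the same mesh $\delta(\Sigma)$ work after a small reparametrization, with $N(\Sigma')$ bounded by $N(\Sigma) + C$ for a universal constant $C$ absorbing the reparametrization cost. Extracting a finite subcover $\{U_{\Sigma_1}, \dots, U_{\Sigma_m}\}$ produces a maximum $N_0 := \max_i N(\Sigma_i)$, and we set $N := N_0 + C$.

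The step I expect to be the main obstacle is verifying that the Pachner moves and the small re-triangulations along each path can be performed without violating the dihedral-angle envelope $[\pi-\epsilon,\pi+\epsilon]$ or the existence of a PL normal bundle of radius $\epsilon$. The cushion built into the definition of $\mathrm{Good}_n$ (edge lengths allowed in an open range and an \emph{open} angle bound) is precisely what enables these moves, but to convert "sufficiently fine mesh" into a mesh scale \emph{uniform} over all of $\mathrm{Good}_n$ one must invoke compactness to extract a uniform lower bound on curvature radius and normal-bundle injectivity radius of the smoothed family. Once these uniform bounds are in hand, the desired $N$ drops out immediately from the finite-subcover argument above.
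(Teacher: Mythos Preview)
The paper gives no proof at all: the lemma is announced as ``an immediate consequence of compactness'' and closed with a \qed. Your proposal is therefore not so much a different route as an attempt to actually justify what the paper asserts without argument.

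Your outline is basically sound and identifies the two genuine ingredients the paper suppresses. First, one needs connectivity of the space of genus-$g$ Heegaard surfaces in $S^3$ (Waldhausen) to know that a path to $\ast$ exists \emph{at all} for each $\Sigma$; the paper never mentions this, but without it compactness alone proves nothing. Second, one needs the compactness of $\mathrm{Good}_n$ to upgrade the pointwise bound $N(\Sigma)$ to a uniform $N$, via a finite subcover exactly as you describe. That is the entire content of the paper's one-word proof, and your second paragraph captures it cleanly.

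Where you are working harder than necessary is in the PL machinery. The Pachner-move bookkeeping, curvature bounds, and explicit mesh control are not really needed at the level of rigor the paper is operating at: once you accept that a smooth isotopy can be PL-approximated in the Hausdorff topology (which the paper's earlier paragraph on Whitehead triangulations and straightening already grants), the only issue is uniformity, and that is pure compactness. Your final paragraph correctly flags the dihedral-angle and normal-bundle constraints as the place where care is needed, but the paper's attitude is that the built-in slack in the definition of $\mathrm{Good}_n$ absorbs this, and a referee would likely accept that. So: your argument is correct, considerably more detailed than the paper's, and the extra detail buys an honest proof rather than an appeal to intuition.
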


\begin{defin*}
	The \emph{frame complexity} (fc) of an element $\alpha \in G_g$ is the minimum $n$ so that $\alpha$ is represented by a loop of PL embeddings $\{\alpha_t\}$ so that each $\alpha_t$ is in $\mathrm{Good}_n$.
\end{defin*}

\begin{thm}\label{old4}
	$G_g$ is finitely generated (fg) iff frame complexity (fc) is uniformly bounded above on $G_g$.
\end{thm}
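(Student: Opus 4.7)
The plan is to handle the two directions separately. The forward implication is essentially immediate from the definition of fc, while the reverse reduces, via Lemma \ref{old44}, to a compactness argument identifying $G_g$ as the image of a finitely generated fundamental group.

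For $(\Rightarrow)$, suppose $G_g = \langle g_1, \ldots, g_k \rangle$. Choose PL-loop representatives $\lbar{g}_i$ of the generators based at $\ast$ with $\mathrm{fc}(g_i) = n_i$, and set $n_0 = \max_i n_i$. Every $h \in G_g$ is a word in the $g_i^{\pm 1}$; concatenating the $\lbar{g}_i^{\pm 1}$ according to this word yields a based PL loop representing $h$ with every frame in $\mathrm{Good}_{n_0}$, so $\mathrm{fc}(h) \leq n_0$ uniformly on $G_g$.

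For $(\Leftarrow)$, suppose $\mathrm{fc}(h) \leq n$ for all $h \in G_g$, and choose $N$ as in Lemma \ref{old44}. The natural map $\Phi: \pi_1(\mathrm{Good}_N, \ast) \ra G_g$ sending a based PL loop to its Heegaard isotopy class is surjective: any $h$ has a PL-loop representative $\{h_t\}$ in $\mathrm{Good}_n \subset \mathrm{Good}_N$ based at some $h_0$, and conjugation by a connecting path $\gamma$ from $\ast$ to $h_0$ (supplied by Lemma \ref{old44}) produces a based representative staying in $\mathrm{Good}_N$. It then suffices to show $\pi_1(\mathrm{Good}_N, \ast)$ is finitely generated. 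Modulo the contractible action of $\R^3$-translations, $\mathrm{Good}_N$ is a compact subset of the finite disjoint union, over the finitely many combinatorial triangulation types on at most $N$ vertices, of vertex-position spaces $(\R^3)^m$. The conditions defining $\mathrm{Good}_N$ (edge lengths in $[1, 1.5]$, dihedral angles in $[\pi - \epsilon, \pi + \epsilon]$, PL normal-bundle condition, global embeddedness) are all semi-algebraic, so $\mathrm{Good}_N$ is a compact semi-algebraic set; by \L{}ojasiewicz it is triangulable as a finite simplicial complex, which has finitely presented fundamental group. Surjectivity of $\Phi$ then gives finite generation of $G_g$.

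The main obstacle is making the finite-complex structure of $\mathrm{Good}_N$ precise: when a PL path crosses a vertex collision or an edge flip, it transitions between combinatorial strata, and one must check these transitions are consistent so that the strata fit together as a single compact polyhedron rather than disjoint pieces. This is the technical heart of the argument; the remaining steps are formal, and the compactness of $\mathrm{Good}_n/\mathrm{trans}$ underlying Lemma \ref{old44} already does most of the work of guaranteeing that collisions cannot escape $\mathrm{Good}_N$.
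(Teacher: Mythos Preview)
Your forward direction matches the paper's exactly. For the reverse, you take a genuinely different route. The paper argues directly with paths: given a loop in $\mathrm{Good}_n$, it inserts, at closely spaced times, a detour $\gamma_i$ to $\ast$ and back (Lemma \ref{old44} supplies each $\gamma_i$ in $\mathrm{Good}_N$), thereby expressing the loop as a product of ``units'' $\gamma_i \circ \text{segment}_i \circ \gamma_{i+1}^{-1}$. Compactness is then invoked on the level of \emph{paths}: the segments and the $\gamma_i$ can be drawn from compact families of paths in $\mathrm{Good}_n$ and $\mathrm{Good}_N$, so only finitely many homotopy classes of units arise, and these are the generators. Your approach instead packages the same compactness into a global statement about $\pi_1(\mathrm{Good}_N,\ast)$, appealing to semi-algebraic triangulability to get a finite complex.

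Both arguments are valid in outline, and both ultimately rest on ``compact plus tame implies finitely many homotopy classes.'' The paper's version is more elementary and avoids real-algebraic machinery, but is correspondingly sketchier about why compactness of path families yields finitely many classes up to deformation. Your version is cleaner conceptually but, as you correctly flag, must confront the stratified structure of $\mathrm{Good}_N$ head-on: paths are allowed to re-triangulate (the paper is explicit that paths are continuous in the Hausdorff topology and may change combinatorial type), so $\mathrm{Good}_N$ is not a single semi-algebraic chart but a quotient of a disjoint union of strata under edge-collapse/flip identifications. Verifying that this quotient is a finite polyhedron---and not, say, something with Hawaiian-earring pathology---is real work that \L{}ojasiewicz on each stratum does not by itself provide. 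You are right to isolate this as the crux; the paper's path-cutting argument sidesteps it by never needing a global cell structure.
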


\begin{proof}[Proof of Theorem \ref{old4}]
	$G_g$ fg clearly implies bounded fc. For every $\alpha \in G_g$ it is simply the max of fc over a generating set.

	On the other hand, if we can homotop the loop $\alpha$ so that, without changing the notation $\alpha$, every "frame" $\alpha_t$ of our movie lies in $\mathrm{Good}_n$, we can surely write it as a product from a fixed generating set. This is done using lemma \ref{old44}. A \emph{long} path in $\mathrm{Good}_n$ can be intermittently modified by inserting an arc $\gamma_i$ to $\ast$ and then, immediately, its inverse $\gamma_i^{-1}$, $\gamma_i \in \mathrm{Good}_N$. By compactness of $\mathrm{Good}_N$ there will be, up to deformation, only finitely many choices for the units formed by ($\gamma_i \circ \text{segment}_i \circ \gamma_i^{-1}$), where $\text{segment}_i$ represents the intervals of the original long path between the intermittent returns to $\ast$. These "units" form the required finite generating set. The $\text{segments}_i$ and the $\gamma_i$ may (and must) all be chosen from a compact subset of the space of paths in $\mathrm{Good}_n$ and $\mathrm{Good}_N$, respectively.
\end{proof}

\newpage
\begin{bibdiv}
\begin{biblist}

\bib{gabai}{article}{
	author = {David Gabai},
	title = {Foliations and the topology of 3-manifolds III},
	journal = {J. Differ. Geom.},
	volume = {26},
	year = {1987},
	pages = {479-536}
}

\bib{thompson}{article}{
	author = {Abigail Thompson},
	title = {Thin Position and the Recognition Problem for $S^3$},
	journal = {Math. Res. Lett.},
	volume = {1},
	year = {1994},
	pages = {613-630}
}

\bib{st}{article}{
	author = {Abigail Thompson},
	author = {Martin Scharlemann},
	title = {Thin position for 3-manifolds},
	journal = {Am. Math. Soc. Contemp. Math.},
	volume = {164},
	year = {1994},
	pages = {231-238}
}

\bib{fh}{article}{
	author = {Michael Freedman},
	author = {Jonathan Hillman},
	title = {Width of codimension two knots},
	eprint = {arXiv: 1902.07352}
}

\bib{gibson}{book}{
	author = {Christopher Gibson},
	author = {Klaus Wirthm\"{u}ller},
	author = {Andrew Plessis},
	author = {Eduard Looijenga},
	title = {Topological Stability of Smooth Mappings},
	series = {Lecture Notes in Mathematics},
	volume = {552},
	publisher = {Springer},
	year = {1976}
}

\bib{montesinos}{article}{
	author = {Jose Montesinos},
	title = {A note on 3-fold branched coverings of $S^3$},
	journal = {Math. Proc. Camb. Philos. Soc.},
	year = {1980},
	volume = {88},
	number = {2},
	pages = {321-325}
}

\bib{piergallini}{article}{
	author = {Riccardo Piergallini},
	title = {Four-manifolds as 4-fold branched covers of $S^4$},
	journal = {Topology},
	volume = {34},
	number = {3},
	year = {1995},
	pages = {497-508}
}

\bib{rp}{article}{
	author = {Jon Pitts},
	author = {J. H. Rubenstein},
	title = {Equivariant minimax and minimal surfaces in geometric three-manifolds},
	journal = {Bull. Am. Math. Soc.},
	volume = {19},
	number = {1},
	year = {1988},
	pages = {303-309}
}

\bib{ketover}{article}{
	author = {Daniel Ketover},
	title = {Genus bounds for min-max minimal surfaces},
	eprint = {arXiv: 1312.2666}
}

\bib{groupw}{article}{
	author = {Michael Freedman},
	title = {Group Width},
	journal = {Math. Res. Lett.},
	volume = {18},
	number = {3},
	year = {2011},
	pages = {433-436}
}

\bib{dehn}{article}{
	author = {Michael Freedman},
	author = {Martin Scharlemann},
	title = {Powell moves and the Goeritz group},
	journal = {UCSB Math},
	number = {2018-11},
	year = {2018}
}

\bib{litherman}{article}{
	author = {Richard A. Litherland},
	title = {Deforming twist-spun knots},
	journal = {Trans. Amer. Math. Soc.},
	volume = {250},
	pages = {311-331},
	year = {1979}
}

\bib{asano}{article}{
	author = {Kouhei Asano},
	title = {A note on surfaces in 4-spheres},
	journal = {Mat. Sem. Notes Kobe Univ.},
	volume = {4},
	number = {2},
	year = {1976},
	pages = {195-198}
}

\end{biblist}
\end{bibdiv}

\end{document}